   \def\MR#1{}
\newtheorem{Th}{Theorem}[section] 
\newtheorem{Lemma}[Th]{Lemma}
\newtheorem{Cor}[Th]{Corollary} 
\newtheorem{Prop}[Th]{Proposition}
\newtheorem{Conj}[Th]{Conjecture} 
\newtheorem{Def}[Th]{Definition}
\newtheorem{Rem}[Th]{Remark}
\def\cF{\mathcal F}
\newcommand{\C}{\mathbb{C}}
\newcommand{\N}{\mathbb{N}}
\newcommand{\R}{\mathbb{R}}
\newcommand{\T}{\mathbb{T}}
\newcommand{\Z}{\mathbb{Z}}
\def\Arg{\mbox{Arg}}
\def\d{\mbox{d}}
\def\Ker{\mbox{Ker}}
\def\Supp{\mbox{Supp}}
\title{Positive currents on non-k\"ahlerian surfaces, II}
\author{Ionu\cb{t} Chiose and Matei Toma}
\address{Ionu\cb{t} Chiose, 
Institute of Mathematics of the Romanian Academy,
Bucharest, Romania}
\email{Ionut.Chiose@imar.ro}
\address{Matei Toma, 
Universit\'e de Lorraine, CNRS, IECL, F-54000 Nancy, France}
\email{Matei.Toma@univ-lorraine.fr}
 \date{\today}
\thanks{ AMS
  Classification (2020): 32J15; secondary: 32U40.}
\begin{document}

\dedicatory{To the memory of Mihnea Col\cb{t}oiu}

\begin{abstract}
We present some properties of positive closed currents of type $(1,1)$ on compact non-k\"ahlerian surfaces related to our previous study of these objects started in \cite{ChiTo2}.
\end{abstract}
\maketitle

\noindent

\section{Introduction}

The first systematic study of compact non-k\"ahlerian complex  surfaces was done by Kodaira as part of his comprehensive work on the classification of compact complex surfaces in the sixties. This was followed by work of M. Inoue and Ma. Kato in the seventies who constructed examples of new classes of non-k\"ahlerian  surfaces. Together with the elliptic surfaces and the Hopf surfaces these are the only classes of compact non-k\"ahlerian  surfaces known to this day.  In fact  the Global Spherical Shell Conjecture 
 claims that any non-k\"ahlerian compact complex surface should belong to one of these  classes, \cite{NakamuraSugaku}, see also  \cite{TelemanAnnals}.  A positive answer to this conjecture would mean finishing the last step lacking for a complete classification of all compact complex surfaces up to bimeromorphic equivalence. 

One approach to the study of compact non-k\"ahlerian surfaces  has been through the algebraic objects they may admit, such as algebraic curves. It is a problem though  to show that such objects exist at all on compact non-k\"ahlerian surfaces. Instead,  we know by a result of Harvey and Lawson that the lack of K\"ahler metrics is equivalent to the existence of positive currents which are $(1,1)$-components of a boundary, \cite{HarveyLawson}. 
This result was further refined in \cite{LamariJMPA}  where it is shown  that any compact non-k\"ahlerian surface admits non-trivial positive $\d$-exact currents of type $(1,1)$. In \cite{HarveyLawson} the structure of these currents was described for elliptic, Hopf and Inoue surfaces. 
This description was extended to the other class of known surfaces, that of Kato surfaces, in \cite{TomaCurrents}. Inspired by these structural results, by phenomena appearing in the study of the K\"ahler rank on surfaces, \cite{ChiTo}, as well as by Brunella's postumuous paper \cite{BrunellaGreen2}, we introduce in \cite{ChiTo2} an invariant on a class of $\d$-exact positive currents which helps us to distinguish two classes among the known non-k\"ahlerian surfaces, namely parabolic and hyperbolic surfaces, see Section \ref{sec:preparations}. 
Hoping that this could lead to a fruitful  approach towards the Global Spherical Shell Conjecture, we formulate in \cite{ChiTo2} three conjectures related to the presence of certain $\d$-exact positive currents on the surface under study. Under the perspective of these conjectures we analyse in the present paper further properties of exact positive $(1,1)$-currents on  compact non-k\"ahlerian surfaces.
More precisely, we discuss the following:
\begin{enumerate}
\item If $T$ is an exact positive $(1,1)$-current on a compact non-k\"ahlerian surface $X$, we will  examine the set $L^{2}_{-1}(T|X)$ of points $x\in X$ around which $T$ is locally in $L^{2}_{-1}$.  We show that on parabolic surfaces $L^{2}_{-1}(T|X)$ contains the complement of the union of all divisors of $X$. We then refine our previous conjecture \ref{conj:Lelong}.
\item We establish the existence of a distingushed current related to the Green function on hyperbolic surfaces.
We relate this to our conjecture \ref{conj:I=0}.
\item 
Related to the conjecture \ref{conj:I>0} we show that any exact positive $(1,1)$-current $T\in L^{2}_{-1}(X)$ with $I(T)\ne0$ induces a singular Gauduchon current on $X$ of a special form, see Section \ref{sec:preparations} for notations and terminology.
\end{enumerate}

\section{Background}\label{sec:preparations}

We refer the reader to the monograph \cite{BHPV} for general facts on compact complex surfaces, to the survey paper \cite{NakamuraSugaku} on more specific facts on non-k\"ahlerian compact complex surfaces and to our paper  \cite{ChiTo2} for more recent results on positive currents on such surfaces.

Let $X$ be non-k\"ahlerian compact complex surface. The existence of non-trivial exact positive $(1,1)$-currents on $X$ implies that the natural map  $j:H^{1,1}_{BC}(X,{\mathbb R})\to H^{1,1}_{A}(X,{\mathbb R})$ from  the Bott-Chern cohomology of $X$ in degree $(1,1)$ to the corresponding Aeppli cohomology has a non-trivial kernel. It can be moreover shown that this kernel is one-dimensional, cf.  \cite[Appendix]{ChiTo2}. We denote by $\tau$ a smooth representative of a class of a non-trivial exact positive $(1,1)$-current in the Bott-Chern cohomology group $H^{1,1}_{BC}(X,\R)$. Its class may be thought of as a positive generator of $\Ker(H^{1,1}_{BC}(X,{\mathbb R})\to H^{1,1}_{A}(X,{\mathbb R}))$.

We denote by $L^2_{-1}(X)$ the space of $(1,1)$-currents on $X$ with coefficients in $L^2_{-1}$. 
The degeneration of the Fr\"olicher spectral sequence at the $E_{1}$-level for compact complex surfaces  implies that any exact  $(1,1)$ current $T$ on $X$ admits a "primitive current", that is  a bidegree $(0,1)$-current $S$ 
such that $T=\partial S$ and $\bar\partial S=0$. It is  not difficult to see that in this case we also have  $\bar\partial S=0$, hence  $T=\d S$. 

In \cite{ChiTo2} we show that if $T$  is positive of type $(1,1)$,  in $L^{2}_{-1}(X)$, $\d$-exact and if $S$ is a primitive current of $T$ as above, then 
$S$ is in $L^2(X)$ and $i\bar S\wedge S$ is $i\partial\bar\partial$-closed 
hence defines 
 a linear form 
$$Ker(H^{1,1}_{BC}(X,{\mathbb R})\to H^{1,1}_{A}(X,{\mathbb R}))\to \R, \ \ \alpha\mapsto \int_{X}\alpha\wedge i\bar S\wedge S.$$ 
This linear form only depends on $T$ and not on the chosen primitive current $S$. We will denote it by $I(T)$. 
When $T$ is moreover smooth the vanishing or non-vanishing of the form $I(T)$ served in \cite{ChiTo} to distinguish between two fundamentally different dynamical behaviours of the induced holomorphic foliation on $X$. This distinction is investigated also for $T$ singular in the case when  $X$ is a known non-k\"ahlerian compact complex surface and fits to the following clssification proposed in \cite{ChiTo2}.

\begin{Def} (\cite[Definition 3.5]{ChiTo2})
We say that a non-k\"ahlerian compact complex surface $X$ is {\em parabolic} 
 if its minimal model belongs to one of the classes: Hopf surfaces, Enoki surfaces, non-k\"ahlerian elliptic surfaces. 
We say that $X$  is  {\em hyperbolic} if its minimal model is either an Inoue surface, an Inoue-Hirzebruch surface or an intermediate Kato surface.
 \end{Def}

We may express the results in \cite{ChiTo2} concerning the vanishing of the forms $I(T)$ as follows.

  \begin{Prop}\label{prop:distinction}
  Let $X$ be a non-k\"ahlerian compact complex surface. 
\begin{enumerate}
\item If $X$ is hyperbolic then all its exact positive $(1,1)$-currents $T$ are in $L^2_{-1}(X)$ and have $I(T)=0$.
\item If $X$ is parabolic then $X$ admits exact positive $(1,1)$-currents $T$ not in   $L^2_{-1}(X)$. Morever   all its exact positive $(1,1)$-currents $T$ in   $L^2_{-1}(X)$ necessarily have $I(T)\ne0$.
\end{enumerate}
\end{Prop}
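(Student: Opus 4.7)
The plan is to verify each statement of the proposition by running through the classification of known non-k\"ahlerian compact complex surfaces and specializing the main results of \cite{ChiTo2} to each class. Since blowing up a point replaces an exact positive current $T$ by its proper transform plus at most a current supported on the exceptional curve, and preserves both membership in $L^2_{-1}(X)$ and the linear functional $I(T)$ (the latter via functoriality of Bott-Chern cohomology under the bimeromorphic map), we may reduce in each case to the minimal model.

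For part (1), suppose $X$ is hyperbolic. In each of the three subclasses (Inoue surfaces, Inoue-Hirzebruch surfaces, intermediate Kato surfaces) the structural description of exact positive $(1,1)$-currents from \cite{HarveyLawson} and \cite{TomaCurrents} applies: every such $T$ is supported on or built from the rational curves forming the maximal divisor, together with explicit contributions coming from the geometry of the universal cover. One then produces a primitive current $S$ with $T=\d S$ whose coefficients are bounded, in particular lie in $L^2$, so that $T\in L^2_{-1}(X)$. To establish $I(T)=0$, one uses the existence of a smooth representative $\tau$ of the generator of $\mathrm{Ker}(j)$ that is itself $\d$-exact outside the maximal divisor, and integrates by parts against $i\bar S\wedge S$ using that $S$ is $\bar\partial$-closed.

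For part (2), suppose $X$ is parabolic. On a minimal Hopf surface, the universal cover $\C^2\setminus\{0\}$ carries explicit quasi-plurisubharmonic potentials with a logarithmic singularity at the origin descending to the surface and yielding an exact positive $(1,1)$-current $T$ whose coefficients have a local Lelong-type singularity incompatible with $L^2_{-1}$. The analogous construction on an Enoki surface uses its Kato-type universal cover, and on a non-k\"ahlerian elliptic surface it uses pullback from the base of the elliptic fibration. Conversely, given any $T\in L^2_{-1}(X)$ exact positive with primitive $S\in L^2(X)$, one evaluates $I(T)$ against the class of the representative $\tau$: in the parabolic regime the combination of the explicit form of $\tau$ and the structure of primitives on the universal cover forces $\int_X \tau\wedge i\bar S\wedge S>0$, hence $I(T)\neq 0$.

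The main obstacle I anticipate is the hyperbolic case for intermediate Kato surfaces: there the exact positive currents may concentrate on the cycle of rational curves in rather subtle ways, and producing a primitive $S$ with global $L^2$-control near the singular locus of $T$, together with the identity $I(T)=0$, requires a careful analysis of what happens in neighbourhoods of the nodes of the maximal divisor. The parabolic direction is conceptually easier but still requires delicate non-vanishing arguments, which rest on the fact that $\tau$ can be chosen strictly positive on a nonempty open set where $i\bar S\wedge S$ does not vanish generically.
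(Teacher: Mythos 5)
The paper does not actually prove this Proposition: it is stated as a summary of results already established in \cite{ChiTo2} (in particular Proposition 3.6 there), so a self-contained proof would have to redo that case-by-case analysis, which is what you attempt; but several of your steps fail. First, your reduction to the minimal model rests on the claim that blowing up preserves membership in $L^2_{-1}$; the paper itself points out (Remark \ref{prop:Hopf}$+1$, i.e.\ the remark following Proposition \ref{prop:Hopf}, citing \cite{CoGuZe}) that this is false: if $T=i\partial\bar\partial\varphi$ with $\partial\varphi\in L^2$, the pulled-back potential under a blow-up need not satisfy $\partial p^*\varphi\in L^2$. Second, your structural description of exact positive currents on hyperbolic surfaces (``supported on or built from the rational curves forming the maximal divisor'') is wrong: Inoue surfaces contain no curves at all, their exact positive current being the foliated current of \cite{HarveyLawson}; and on Inoue--Hirzebruch and intermediate Kato surfaces the intersection matrix of the curves is negative definite, so no nonzero effective divisor is homologous to zero. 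Moreover a current of integration along a curve is never locally in $L^2_{-1}$, so if your description were correct it would contradict the very statement (1) you are proving. The currents on hyperbolic surfaces are Green-function-type currents (\cite{BrunellaGreen2}, \cite{TomaCurrents}), and both the $L^2$ control of a primitive $S$ and the identity $I(T)=0$ must be extracted from that explicit structure; ``integrating by parts against a $\tau$ that is exact outside the maximal divisor'' is not an argument, since $I(T)$ is precisely the pairing whose vanishing has to be established.

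For the parabolic direction, the examples of exact positive currents not in $L^2_{-1}$ are much simpler than what you propose: they are integration currents along null-homologous divisors, namely the cycle on an Enoki surface, a fibre of the elliptic fibration, and the elliptic curve(s) on a Hopf surface (compare Propositions \ref{prop:Enoki}, \ref{prop:elliptic}, \ref{prop:Hopf}). Your ``logarithmic singularity at the origin'' does not descend to a Hopf surface, because the origin is removed from the universal cover $\C^2\setminus\{0\}$, and on an Enoki surface no alternative construction is available since the cone of exact positive currents is generated by the cycle alone. Finally, the key assertion of part (2), that every exact positive $T=\partial S\in L^2_{-1}(X)$ on a parabolic surface satisfies $\int_X\tau\wedge i\bar S\wedge S\neq 0$, is stated in your proposal but not derived; this is the substantive content of the statement, and it requires structure results such as Lemma \ref{lem:Hopf} (residual currents on class $1$ Hopf surfaces are of the form $\pi^*(f)\Omega$) together with the analogous analyses on Enoki and elliptic surfaces carried out in \cite{ChiTo2}. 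As written, the proposal asserts the conclusions of both parts rather than proving them.
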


We recall the following conjectures from \cite{ChiTo2} which together give a partial converse to Proposition \ref{prop:distinction}.

\begin{Conj}\label{conj:I=0}
If $X$ is a non-k\"ahlerian surface all of whose exact positive $(1,1)$-currents $T$ are in $L^2_{-1}(X)$ and satisfy $I(T)=0$, then $X$ 
 is hyperbolic.
\end{Conj}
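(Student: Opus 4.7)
I would argue by contrapositive: assuming $X$ is non-k\"ahlerian but not hyperbolic, we seek an exact positive $(1,1)$-current $T$ that either fails to lie in $L^2_{-1}(X)$ or has $I(T)\neq 0$. If the Global Spherical Shell Conjecture is granted, then a non-hyperbolic non-k\"ahlerian $X$ is automatically parabolic in the sense of the definition above, and Proposition \ref{prop:distinction}(2) furnishes the required $T$ directly. The substantive content of Conjecture \ref{conj:I=0} is therefore to bypass GSS and derive the same conclusion from the analytic hypothesis on currents alone.

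For an unconditional approach I would exploit the bilinear pairing $(S_1,S_2)\mapsto\int_X\tau\wedge i\bar S_1\wedge S_2$ on primitive currents, where $\tau$ is the smooth Bott--Chern representative fixed in Section \ref{sec:preparations}. The hypothesis $I(T)=0$ for every exact positive $(1,1)$-current $T=\d S$ makes this pairing vanish on the diagonal of the cone of primitive currents coming from exact positive $(1,1)$-currents. The hope is that this vanishing can be promoted to a rigidity statement: combined with the $L^2_{-1}$ assumption and a sharpening of Conjecture \ref{conj:Lelong}, one should obtain that the support of every exact positive $T$ is contained in a proper analytic subset of $X$, and that the distinguished Green-type current constructed for hyperbolic surfaces in the present paper controls, up to a tame perturbation, all the other exact positive $(1,1)$-currents on $X$.

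The final step, and the main expected obstacle, is to recognize $X$ geometrically from this structural data as Inoue, Inoue--Hirzebruch, or intermediate Kato. For $b_2(X)>0$ one would try to apply Teleman's machinery from \cite{TelemanAnnals} to the foliation or fibration produced by the distinguished current; for $b_2(X)=0$ one would hope to extract an affine or parabolic structure forcing an Inoue model. In both cases the identification amounts, modulo the current-theoretic reductions of the previous paragraph, to a weakened form of GSS, and I expect this to be the hard core of the argument. The analytic rigidity in the earlier steps should, by contrast, be achievable by combining the techniques of \cite{ChiTo2} with the results of the present paper, in particular by using the Green-function current to obstruct the existence of a second linearly independent exact positive current with nontrivial Gauduchon companion.
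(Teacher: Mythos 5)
This statement is a conjecture: the paper does not prove it, and neither does your proposal. What you have written is a programme, not an argument, and it has genuine gaps at every load-bearing point. Your first route is explicitly conditional on the Global Spherical Shell Conjecture; since the whole point of Conjectures \ref{conj:I=0}--\ref{conj:Lelong} (as stated in the introduction) is to provide an approach \emph{towards} GSS, assuming GSS to dispose of Conjecture \ref{conj:I=0} is conditional in exactly the direction that makes the result vacuous for the authors' purposes. (Granting GSS, the contrapositive via Proposition \ref{prop:distinction}(2) is indeed correct, but that is precisely the ``known surfaces'' case already settled in \cite{ChiTo2}.)

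Your ``unconditional'' route is a sequence of hopes with no supporting argument, and at least one of its intermediate goals is false on the very surfaces the conjecture targets: on Inoue surfaces (which are hyperbolic, have $b_2=0$ and carry no curves) the exact positive $(1,1)$-currents are induced by the canonical foliation and have support equal to all of $X$, so the hoped-for rigidity statement that every exact positive $T$ has support in a proper analytic subset cannot be a consequence of the hypotheses, and cannot serve as a stepping stone to hyperbolicity. Moreover, the distinguished current of Section 4 of the paper is constructed \emph{under} the hypotheses of Conjecture \ref{conj:I=0} (it exists on any surface satisfying them), so its existence by itself carries no information distinguishing hyperbolic models; the paper offers it only as evidence and explicitly leaves open even the expected property $T=i\bar S\wedge S$. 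Finally, you concede that the identification of $X$ as Inoue, Inoue--Hirzebruch or intermediate Kato ``amounts to a weakened form of GSS'' --- that concession is accurate, and it means the core of the statement remains unproved in your proposal, as it does in the paper.
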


\begin{Conj}\label{conj:I>0}
If $X$ is a non-k\"ahlerian surface all of whose exact positive $(1,1)$-currents $T$ are in $L^2_{-1}(X)$ but do not all satisfy $I(T)=0$,   then $X$ admits a cycle of rational curves.
\end{Conj}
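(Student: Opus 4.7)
My plan is to extract a cycle of rational curves from the singular Gauduchon current $\Omega:=i\bar S\wedge S$ which, as announced in item (3) of the introduction, is attached to any exact positive $(1,1)$-current $T\in L^{2}_{-1}(X)$ with $I(T)\neq 0$. Since $\bar\partial S=0$, the current $\Omega$ is a positive $(1,1)$-current with $i\partial\bar\partial\Omega=0$, and its Aeppli class pairs non-trivially with $[\tau]\in H^{1,1}_{BC}(X,\mathbb{R})$ precisely because $I(T)\neq 0$. So the task reduces to showing that the only non-k\"ahlerian surfaces supporting such a singular Gauduchon current of the asserted special form must contain a cycle of rational curves.

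First I would use the Siu decomposition of $\Omega$, writing $\Omega=\Omega_{\mathrm{diff}}+\sum_{i}\nu_{i}[D_{i}]$ with $\nu_{i}>0$ and $D_{i}$ irreducible curves, and then invoke the ``special form'' of $\Omega$ established in Section \ref{sec:preparations}'s item (3) to identify the diffuse part $\Omega_{\mathrm{diff}}$ as being transverse, in an appropriate measure-theoretic sense, to the divisorial part $D:=\sum_{i}\nu_{i}D_{i}$. The next step is intersection-theoretic: the $i\partial\bar\partial$-closed positive current $[D]$ must have non-positive self-intersection, $X$ being non-k\"ahlerian rules out any $D_{i}$ with positive self-intersection, and the non-triviality of the pairing with $[\tau]$ forces $D$ to be non-empty and its class in the Aeppli cohomology to be non-zero.

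Next I would combine the global hypothesis that every exact positive $(1,1)$-current on $X$ lies in $L^{2}_{-1}(X)$ with Proposition \ref{prop:distinction} to rule out the presence of an elliptic curve or a generic Enoki-type component among the $D_{i}$: the existence of such a curve would typically produce, via a fibration or a foliated structure, an exact positive current outside $L^{2}_{-1}(X)$, contradicting the assumption. What should remain is a configuration of rational curves of negative self-intersection, on which a dual-graph analysis in the spirit of Dloussky--Oeljeklaus--Toma and Nakamura can be brought to bear.

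The main obstacle will be excluding pure tree configurations. A negative-definite tree of rational curves can be contracted to a surface with a rational singularity, and if $\Omega$ descended past this contraction to a genuine Gauduchon form on the contracted model, one would obtain a bimeromorphic modification of $X$ to a k\"ahlerian (or almost-k\"ahlerian) object, from which one would hope to derive a contradiction with the non-k\"ahlerian nature of $X$. Making this descent rigorous -- in particular, showing that the prescribed special form of $\Omega$ forces the right local behavior along any exceptional tree for the current to descend -- is where the bulk of the technical work would lie, and is the reason this statement is at present only a conjecture.
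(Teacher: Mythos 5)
You are attempting to prove a statement that the paper itself leaves open: Conjecture \ref{conj:I>0} is one of the conjectures recalled from \cite{ChiTo2}, and the present paper offers no proof of it. What the paper does prove in this direction is only the supporting result of Section 5 (Proposition \ref{gc} and its Corollary): if $T=\partial S$ is exact, positive, \emph{nef}, in $L^2_{-1}(X)$ and $I(T)\neq 0$, then $T+i\bar S\wedge S+i\partial\bar\partial\chi\geq\varepsilon g$ for some distribution $\chi$ and some $\varepsilon>0$. Note two inaccuracies already at your starting point: the ``Gauduchon current'' is $T+i\bar S\wedge S$ corrected by an $i\partial\bar\partial$-term, not $i\bar S\wedge S$ alone, and the paper's argument uses a nefness hypothesis that the conjecture does not supply. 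More importantly, your own text concedes that the decisive step (descending the current past a negative-definite tree and deriving a contradiction) is not carried out, so what you have is a research programme rather than a proof; since the statement is a conjecture, that is unavoidable, but the intermediate steps you treat as routine also contain genuine gaps.

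Concretely: (a) the claim that $\int_X\tau\wedge i\bar S\wedge S\neq 0$ forces the divisorial part of a Siu-type decomposition of $i\bar S\wedge S$ to be non-empty is unjustified --- the residual (diffuse) part is itself $i\partial\bar\partial$-closed and can carry the whole pairing, as happens for the diffuse currents of the form $\pi^*(f)\Omega$ on Hopf surfaces of class $1$ analysed in Section \ref{section:currents}; so at this stage you have not produced a single curve, let alone a cycle. (b) Your appeal to Proposition \ref{prop:distinction} to exclude elliptic-curve or Enoki-type components is circular: that proposition is a statement about surfaces already known to be parabolic or hyperbolic, i.e.\ about the known classes, and for those the hypothesis of Conjecture \ref{conj:I>0} is never satisfied (parabolic surfaces always carry exact positive currents outside $L^2_{-1}(X)$). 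The conjecture is precisely about hypothetical surfaces outside the known classification, so you cannot use classification-dependent facts to constrain the curves they contain without assuming a form of the Global Spherical Shell Conjecture, which is what the whole circle of conjectures is aimed at. (c) The Siu decomposition you invoke is stated for closed positive currents; for a merely pluriharmonic current such as $i\bar S\wedge S$ the decomposition and the positivity of the Lelong numbers along curves require a separate argument. (d) Finally, the tree-exclusion/descent step, which is where a cycle (rather than an arbitrary curve configuration) would actually be produced, is exactly the missing idea; nothing in the paper, nor in your sketch, supplies it.
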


\begin{Conj}\label{conj:Lelong}
If $X$ is a non-k\"ahlerian surface admitting an exact positive $(1,1)$-current not in $L^2_{-1}(X)$, then there exists on $X$ some exact positive current $T$ with a non-vanishing Lelong number at at least one point of $X$.
\end{Conj}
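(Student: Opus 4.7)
The plan is to argue by contrapositive: assume that every exact positive $(1,1)$-current $T$ on $X$ has vanishing Lelong number at every point of $X$, and attempt to conclude that every such current lies in $L^{2}_{-1}(X)$. The strategy is to convert the pointwise Lelong-number hypothesis into the uniform $L^{2}$-energy bound required by membership in $L^{2}_{-1}(X)$.

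The first step is a Siu decomposition $T=\sum_{i}\nu_{i}[D_{i}]+R$, in which $R$ has zero Lelong numbers along each divisor; under the standing assumption the analytic part $\sum_{i}\nu_{i}[D_{i}]$ vanishes, so $T=R$ is diffuse. Next, apply Demailly's regularization with small loss of positivity to produce a sequence of smooth semi-positive forms $T_{\epsilon}$ cohomologous to $T$ in Bott--Chern cohomology, whose local potentials have uniformly small Lelong numbers. By the degeneration of the Fr\"olicher spectral sequence at $E_{1}$ one may write $T_{\epsilon}=\partial S_{\epsilon}$ with $\bar\partial S_{\epsilon}=0$, and the task reduces to proving that $\int_{X}i\bar S_{\epsilon}\wedge S_{\epsilon}$ stays bounded as $\epsilon\to 0$. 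Here I would exploit the fact, recalled in the excerpt, that $\ker\bigl(H^{1,1}_{BC}(X,\R)\to H^{1,1}_{A}(X,\R)\bigr)$ is one-dimensional, so that $T_{\epsilon}$ is essentially determined by a single real parameter plus an $i\partial\bar\partial$-exact term; a compactness argument combined with the vanishing of Lelong numbers should preclude the mass concentration that would blow the energy up.

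The hard part, and plausibly the obstruction that keeps the statement open, is the diffuse case just described: a plurisubharmonic function can have zero Lelong numbers at every point and yet possess infinite local Dirichlet energy---the textbook example being $u(z)=-\log(-\log|z|^{2})$ near the origin of $\C^{2}$. Consequently the vanishing of Lelong numbers alone does not suffice to force $T\in L^{2}_{-1}(X)$; one must exploit genuinely global features of a compact non-k\"ahlerian surface to exclude such \emph{logarithmic-of-logarithmic} singularities. A plausible supplementary ingredient is the observation, emphasized in the introduction, that on parabolic surfaces the singular locus of any exact positive current is contained in the union of divisors; extending such a statement to all non-k\"ahlerian surfaces, without tacitly invoking the Global Spherical Shell Conjecture, appears to be the real source of the difficulty.
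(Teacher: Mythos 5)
There is a genuine gap, and in fact the statement you are trying to prove is not proved in the paper at all: it is Conjecture~\ref{conj:Lelong}, recalled from \cite{ChiTo2}, and it remains open. The paper only (i) refines it to Conjecture~\ref{conj:Lelong refined}, (ii) observes it is vacuous for hyperbolic surfaces because there all exact positive $(1,1)$-currents lie in $L^2_{-1}(X)$, and (iii) verifies the refined statement for minimal parabolic surfaces by explicit structure results (Propositions~\ref{prop:Enoki}, \ref{prop:elliptic}, \ref{prop:Hopf}), plus a remark handling blow-ups. So your proposal should not be expected to close into a proof, and indeed you yourself flag that it does not.

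Beyond that, the specific contrapositive strategy you outline --- deduce $T\in L^2_{-1}(X)$ from the vanishing of all Lelong numbers of $T$ --- is refuted by the paper's own examples, not merely by a local model: on a non-k\"ahlerian elliptic surface the pullback of $i\partial\bar\partial\bigl(-(-\log|z|^2)^{1/2}\bigr)$ gives an exact positive current with vanishing Lelong numbers everywhere which is not in $L^2_{-1}$ along a fiber (Proposition~\ref{prop:elliptic}), and a similar current exists on class~$1$ Hopf surfaces (Proposition~\ref{prop:Hopf}(2)). The point of the conjecture is therefore not that the badly singular current itself acquires a positive Lelong number, but that the failure of $L^2_{-1}$ forces the existence of a \emph{different} exact positive current with a Lelong number --- concretely, by the Remark following Conjecture~\ref{conj:Lelong refined}, an integration current along an effective divisor homologous to zero passing through the bad locus (the cycle of rational curves, an elliptic fiber, or the elliptic curves $C_1\cup C_2$ in the known cases). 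Any viable attack must produce such a null-homologous divisor from the non-$L^2_{-1}$ hypothesis, which is a global, essentially classification-flavoured statement; your Siu decomposition, Demailly regularization and energy-boundedness steps never manufacture that divisor, so the argument cannot be completed along the lines you propose.
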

 

\section{The $L^2_{-1}$-locus}\label{section:currents}

In this section we state a refinement of  Conjecture \ref{conj:Lelong}  and show that it is verified for all parabolic surfaces.

\begin{Def} For a $(1,1)$-current $T$  on a complex surface $X$ we define its {\em  $ L^{2}_{-1}$-locus, }  $ L^{2}_{-1}(T|X)$,  {\em on $X$} as the set of points on $X$ around which $T$ is locally in  $ L^{2}_{-1}$.
 \end{Def}

\begin{Conj}\label{conj:Lelong refined}
If $X$ is a non-k\"ahlerian surface admitting an exact positive $(1,1)$-current $T$ not in $L^2_{-1}(X)$ and if $x\in X\setminus L^{2}_{-1}(T|X)$,  then there exists on $X$ some exact positive $(1,1)$-current  with non-vanishing Lelong number at $x$.
\end{Conj}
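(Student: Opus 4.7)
The plan is to construct, starting from the given ``bad'' point $x$ where the exact positive current $T$ fails to be locally in $L^2_{-1}$, some exact positive $(1,1)$-current on $X$ with strictly positive Lelong number precisely at $x$. I would organize the argument in three stages: localize the singularity of $T$ near $x$, produce a positive closed current carrying Lelong mass at $x$, and globalize while preserving $d$-exactness.

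First, I would reformulate the local hypothesis. On a coordinate ball $B\ni x$ write $T=i\partial\bar\partial u$ with $u$ plurisubharmonic, so that $T\in L^2_{-1}$ near $x$ is equivalent to $\partial u\in L^2_{\mathrm{loc}}$ near $x$. The assumption then becomes $\partial u\notin L^2(B')$ for every small ball $B'\ni x$. Equivalently, by the Fr\"olicher degeneration recalled in Section~\ref{sec:preparations}, if $T=\partial S$ with $S$ a $\bar\partial$-closed $(0,1)$-current, then no such $S$ can be chosen to be $L^2$ near $x$. The aim is to upgrade this Sobolev-type singularity into a Lelong-type singularity of \emph{some} exact positive current.

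Second, I would exploit a rescaling or mass-concentration argument at $x$. Setting $T_r:=r^{-2}(\Lambda_r)_*T$ for the dilations $\Lambda_r(z)=rz$ on $B$ and extracting a weak limit $T_0$ on $\C^2$ as $r\to 0$ preserves closedness and positivity, and the resulting tangent cone to $T$ at $x$ should detect obstructions to $L^2_{-1}$-regularity: one would try to show, via a comparison between $L^2$-norms of primitives and mass on shrinking balls in the spirit of Skoda or Demailly's approximation theorem, that the failure of $L^2_{-1}$-regularity forces $T_0$ to carry a nontrivial log-type component through the origin. This would pinpoint a local positive closed current with logarithmic singularity at $x$, and the natural candidate for the desired global current would then be obtained by adjoining a cut-off of such a local current to a small multiple of $T$ itself.

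Third, I would globalize while preserving $d$-exactness. Since $\mathrm{Ker}(H^{1,1}_{BC}(X,\R)\to H^{1,1}_A(X,\R))$ is one-dimensional and spanned by $[\tau]$, the cohomological obstruction to being $d$-exact reduces to a single scalar, which one should be able to absorb by adding a multiple of $\tau$ before rebalancing positivity with a small multiple of $T$. I expect the genuine difficulty of the full conjecture to lie precisely here: the $d$-exactness condition is cohomologically rigid, and a local positive current with log pole at $x$ will not in general extend to a globally $d$-exact positive current without substantial further information on $X$. This is why the parabolic case is the tractable one and the general statement remains conjectural: on parabolic surfaces, $X\setminus L^2_{-1}(T|X)$ is contained in the union of divisors of $X$ and the known divisors there provide explicit exact positive currents with nontrivial Lelong numbers, whereas in the general case no such structure is available \emph{a priori}.
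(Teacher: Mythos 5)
You should be clear that the statement you set out to prove is stated in the paper as a \emph{conjecture}: the paper does not prove it in general, but only verifies it for parabolic surfaces (Propositions~\ref{prop:Enoki}, \ref{prop:elliptic}, \ref{prop:Hopf}), while for hyperbolic surfaces it holds vacuously because there every exact positive $(1,1)$-current lies in $L^2_{-1}(X)$ by \cite[Proposition 3.6]{ChiTo2}. Your proposal does not close this gap either: in your third stage you concede that the local-to-global step (producing a globally $d$-exact positive current from a local current with a log pole) cannot be carried out without substantial further information on $X$, so what you have is a programme, not a proof.

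More importantly, the local core of your plan (the rescaling/tangent-cone step) is not merely incomplete but runs against what is actually true. The failure of $T$ to be locally in $L^2_{-1}$ at $x$ does \emph{not} force the tangent cone of $T$ at $x$ to carry any log-type component: the paper's own constructions exhibit exact positive currents not in $L^2_{-1}$ whose Lelong numbers vanish at \emph{every} point --- the Kiselman-type pullback of $i\partial\bar\partial\bigl(-(-\log|z|^2)^{1/2}\bigr)$ through the elliptic fibration in Proposition~\ref{prop:elliptic}, and the current $f(u)\Omega_1$ with $f(t)=-(a+1)t^a$, $a\in\left]-2,-\frac{3}{2}\right]$, on a class $1$ Hopf surface in Proposition~\ref{prop:Hopf}(2). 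Since the Lelong number is the density of the trace measure, at such points the normalized rescalings of $T$ converge to zero, so no logarithmic singularity can be extracted from $T$ itself; the conjecture deliberately asks for \emph{some other} exact positive current with positive Lelong number at $x$ --- equivalently, by the Remark following the conjecture (using Teleman's result), for an effective divisor homologous to zero passing through $x$ --- and such an object cannot be produced by purely local analysis of $T$. Finally, your closing assertion that on parabolic surfaces $X\setminus L^{2}_{-1}(T|X)$ lies inside null-homologous divisors is exactly the content of the paper's case-by-case verification, which rests on structure theory rather than general arguments: Toma's theorem that on Enoki surfaces all exact positive currents are multiples of the integration current along the cycle, the quasi-bundle structure of non-k\"ahlerian elliptic fibrations, and the Harvey--Lawson type description $T'=\pi^*(f)\Omega$ on class $1$ Hopf surfaces, which yields $L^{\infty}_{1}$ local potentials away from $C_1\cup C_2$; none of this is available for free, and your sketch does not supply a substitute.
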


Note that by \cite[Proposition 3.6]{ChiTo2} all exact positive $(1,1)$-currents on a hyperbolic surface $X$ are in $L^2_{-1}(X)$, so the above conjecture automatically holds for such surfaces.
 
 As remarked in \cite[end of Section 4]{ChiTo2} using a result of \cite{TelemanFamilies} we have the following. 
 \begin{Rem}
 Conjecture \ref{conj:Lelong refined} may be reformulated as follows.
 ``If $T$ is a non-trivial exact positive $(1,1)$ current on a non-k\"ahlerian surface $X$ and $x$ is a point in $X\setminus L^{2}_{-1}(T|X)$, then there exists an effective divisor $D$ on $X$ homologuous to zero on $X$ and such that $x\in \Supp(D)$.''
 \end{Rem}
 
This will be a consequence of the following Propositions.

\begin{Prop}\label{prop:Enoki}
All   non-trivial
 exact positive $(1,1)$-currents on an Enoki surface $X$ are positive multiples of the current of integration along the cycle of rational curves on $X$.
\end{Prop}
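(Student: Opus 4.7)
The plan is to combine a Siu-type decomposition of $T$ with the numerical feature characterizing Enoki surfaces among Kato surfaces.

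First, I would apply Siu's decomposition to the closed positive current $T$. Since the only compact irreducible curves on an Enoki surface are the rational components $C_{1},\ldots,C_{r}$ of its cycle $C$, this takes the form
\[
T=\sum_{i=1}^{r} a_{i}\,[C_{i}]+R,
\]
with $a_{i}\geq 0$ and $R$ a closed positive $(1,1)$-current whose generic Lelong number along every irreducible curve vanishes. Only $T$ is a priori exact; the two pieces of the decomposition are individually merely closed.

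The crux, and the main obstacle, is to show $R=0$. The idea is that the complement $X\setminus C$ of an Enoki surface has parabolic character: it carries a holomorphic foliation whose leaves are non-compact, and whose dynamics rule out the existence of a non-trivial diffuse closed positive $(1,1)$-current supported there. One way to make this precise is to pair $R$ with the characteristic form of the foliation and apply a Brunella-type vanishing in the spirit of \cite{BrunellaGreen2}; an alternative is to appeal to the classification of exact positive currents on Kato-type surfaces established in \cite{TomaCurrents}, of which Enoki is a special case. I expect this geometric input, rather than any formal cohomological manipulation, to do the real work.

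Once $R$ is eliminated, the exactness of $T=\sum a_{i}[C_{i}]$ translates into the vanishing of $\sum a_{i}[C_{i}]$ in $H^{2}(X,\R)$, which, since the $[C_{i}]$ generate $H^{2}(X,\R)$, is equivalent (via Poincar\'e duality) to the vector $(a_{i})_{i}$ lying in the kernel of the intersection matrix $(C_{i}\cdot C_{j})_{i,j}$. For an Enoki surface this kernel is one-dimensional and admits a positive generator, namely the multiplicity vector of the fundamental cycle — this one-dimensionality together with positivity of the generator is precisely the defining numerical feature of Enoki surfaces among Kato surfaces. Consequently $(a_{i})_{i}$ is a positive multiple of that generator, and $T$ is a positive multiple of the current of integration along the cycle, as asserted. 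The hard step is thus the vanishing of the diffuse residue $R$, which cannot be established by formal cohomological arguments alone and requires a genuine use of the foliated geometry of $X\setminus C$.
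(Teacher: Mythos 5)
Your outline has a genuine gap at exactly the point you flag as the crux: the vanishing of the diffuse residue $R$ in the Siu decomposition is never proved. The gesture towards ``Brunella-type vanishing'' via the foliated geometry of $X\setminus C$ is not an argument (note also that $R$ is a current on $X$, not a current supported in $X\setminus C$, and that any such argument must use the exactness of $T$ rather than a blanket non-existence statement for diffuse \emph{closed} positive currents, which is a stronger and unjustified claim). Your fallback option --- ``appeal to the classification of exact positive currents on Kato-type surfaces established in \cite{TomaCurrents}'' --- is circular as a way to finish your sketch: the relevant statement there, \cite[Theorem 10 (b)]{TomaCurrents}, \emph{is} the Proposition being proved. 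Indeed, the paper's proof consists precisely of this citation, so if you intend to invoke \cite{TomaCurrents} you should do so for the whole statement and drop the surrounding scaffolding; as written, your proof reduces either to that citation or to an unproven dynamical assertion.

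Two smaller points. First, your claim that the classes $[C_i]$ generate $H^2(X,\R)$ is false: the intersection form on $H^2(X,\R)$ of such a surface is negative definite (in particular nondegenerate), while the Gram matrix of the $C_i$ is the degenerate affine one, and in fact $\sum_i[C_i]=[D]=0$; fortunately only the easy implication (exactness $\Rightarrow$ $(a_i)$ lies in the kernel of the intersection matrix) is needed, so the cohomological step survives once stated correctly. Second, be careful with the assertion that the only compact curves are the components of the cycle: if one counts parabolic Inoue surfaces among Enoki surfaces, there is in addition an elliptic curve, and ruling out a contribution of it in the Siu decomposition requires a separate (short but not entirely formal) argument, since the pairing of the class of $R$ with that curve is a priori only nonnegative.
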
 

\begin{Prop}\label{prop:elliptic}
Let $X$ be a non-k\"ahlerian elliptic surface and $F$ be a fiber of its elliptic fibration. Then there exists some exact positive $(1,1)$-current $T$ on $X$ with  $L^{2}_{-1}(T|X)=X\setminus F$ and with vanishing Lelong numbers at all points of $X$. On the other hand the current of integration along $F$ is exact. 
\end{Prop}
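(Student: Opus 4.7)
The plan is to handle the two assertions separately. The second one---exactness of $[F]$---follows from the classical fact (Kodaira's classification, cf.\ \cite{BHPV}) that on a non-k\"ahlerian elliptic surface the class of any fiber vanishes in $H^{2}(X,\mathbb{R})$; equivalently $\pi^{*}\colon H^{2}(B,\mathbb{R})\to H^{2}(X,\mathbb{R})$ is the zero map, so any closed $2$-current pulled back from $B$ is automatically $d$-exact on $X$.

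For the existence of $T$, I would produce it as the pullback $\pi^{*}\mu$ of a well-chosen positive measure $\mu$ on $B$. Fix $\varepsilon\in(0,1/2)$ and a local coordinate $z$ on $B$ centered at $b=\pi(F)$. The subharmonic function $\phi(z)=-(-\log|z|)^{1-\varepsilon}$ satisfies, by a direct radial Laplacian computation,
\[
\Delta\phi \;=\; \frac{\varepsilon(1-\varepsilon)}{|z|^{2}(-\log|z|)^{1+\varepsilon}},
\]
a locally integrable positive density. Moreover $\phi(r)/\log r\to 0$ as $r\to 0$, so $\phi$ has vanishing Lelong number at $b$; yet $|\partial\phi|^{2}\sim|z|^{-2}(-\log|z|)^{-2\varepsilon}$ has divergent integral on every disk around $b$ precisely because $2\varepsilon\le 1$. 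I would then glue $dd^{c}\phi$ with zero outside a slightly larger disk to obtain a globally defined positive measure $\mu$ on $B$ whose local potential near $b$ agrees with $\phi$ up to a smooth correction.

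Setting $T=\pi^{*}\mu$, positivity and closedness are clear, and $d$-exactness follows from the first paragraph since $[T]=\pi^{*}[\mu]$ is a scalar multiple of the vanishing fiber class. The Lelong number of $T$ vanishes at every point of $X$, since Lelong numbers are bounded (up to multiplicities at critical points of $\pi$) by those of $\mu$ at the image point, and these vanish throughout $B$. For the $L^{2}_{-1}$-locus: outside $F$ the measure $\mu$ is smooth or zero near $\pi(x)$, so $T$ is smooth there. For a point $x\in F$ I would argue by contradiction. In a product chart $(z,w)$ around $x$ with $\pi(z,w)=z$ one has $T=f(z)\tfrac{i}{2}dz\wedge d\bar z$; any primitive $T=\partial S$ with $S\in L^{2}$, written $S=a(z,w)d\bar z+b(z,w)d\bar w$, must satisfy $\partial_{w}a=0$ and $\partial_w b=\partial_z b=0$, and averaging $a(z,\cdot)$ over the $w$-disk produces a function in $L^{2}$ near $b$ whose $\partial_z$-derivative is a scalar multiple of $f(z)$---i.e.\ an $L^{2}$-primitive of $\mu$ near $b$, contradicting the construction.

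I expect the main difficulty to lie in the narrow calculus window in the second paragraph: crafting a positive measure with vanishing Lelong number at $b$ yet failing $L^{2}_{-1}$ locally is a delicate asymptotic balance in one complex dimension, and the family $\phi=-(-\log|z|)^{1-\varepsilon}$ with $\varepsilon\in(0,1/2)$ is designed precisely to thread it. Singular or multiple fibers cause no additional trouble, since the slicing argument is local on the base and insensitive to the fiber type.
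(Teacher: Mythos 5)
Your proposal is correct and follows essentially the same route as the paper: both produce $T$ as the pullback of a Kiselman-type measure on the base with potential $-(-\log|z|)^{\alpha}$ (the paper takes $\alpha=\tfrac12$, citing Kiselman's example, while your $\alpha=1-\varepsilon$ with $\varepsilon\in(0,\tfrac12)$ works equally well since the $L^2$-divergence criterion is $2\varepsilon\le 1$), and both get exactness from the vanishing of the fiber class and the vanishing of Lelong numbers from the absence of a logarithmic pole in the potential. Your slicing/averaging argument for the failure of $L^{2}_{-1}$ along $F$ is a reasonable way to make explicit what the paper leaves as a ``direct computation.''
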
 

\begin{Prop}\label{prop:Hopf}
Let $X$ be a non-elliptic Hopf surface.  
\begin{enumerate}
\item If $X$ is of class $0$, then all its non-trivial
 exact positive $(1,1)$-currents  are positive multiples of the current of integration along the unique elliptic curve of $X$.
 \item If $X$ is of class $1$ and $C_1$, $C_2$ are its elliptic curves, then any exact positive $(1,1)$-current $T$ on $X$ has local $\partial\bar\partial$-potentials in $L^{\infty}_{1}$ over $X\setminus (C_1\cup C_2)$  and in particular  one has  $L^{2}_{-1}(T|X) \supset X\setminus (C_1\cup C_2)$. Moreover there exist such currents $T$ with  $L^{2}_{-1}(T|X) = X\setminus (C_1\cup C_2)$ and with vanishing Lelong numbers at all points of $X$. On the other hand the current of integration along $C_1\cup C_2$ is exact. 
\end{enumerate}
\end{Prop}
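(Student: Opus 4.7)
The general strategy is to pull back $T$ by the universal cover $\pi\colon\widetilde X=\C^{2}\setminus\{0\}\to X$, extend $\widetilde T=\pi^{*}T$ across the origin by Skoda--El Mir to a positive closed current on $\C^{2}$, and then use $\gamma$-equivariance, where $\gamma$ generates the deck group $\Z$. Since $\C^{2}$ is Stein and $H^{1,1}_{BC}(\C^{2},\R)=0$, we have $\widetilde T=i\partial\bar\partial u$ for a psh $u$ on $\C^{2}$; the equivariance gives $u\circ\gamma=u+h$ with $h=\Re f$ pluriharmonic, and $d$-exactness of $T$ on $X$ forces $f$, hence $h$, to be constant.

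\textbf{Part (1).} In the class 0 case $\gamma(z_{1},z_{2})=(\alpha z_{1}+\lambda z_{2}^{m},\alpha^{m}z_{2})$ with $\lambda\neq 0$, the only irreducible $\gamma$-invariant analytic curve in $\C^{2}$ is $\{z_{2}=0\}$, whose image in $X$ is the unique elliptic curve $C$. By Siu decomposition $\widetilde T=\nu[\{z_{2}=0\}]+R$, with $R$ a $\gamma$-equivariant positive closed current not charging any divisor. The heart of the argument is to show $R=0$, using the specific dynamics of the non-diagonal contraction together with the equivariance of the psh potential of $R$: after subtracting an appropriate multiple of $\log|z_{2}|^{2}$, one reduces to a $\gamma$-invariant usc function on $\C^{2}$, and since every forward orbit of $\gamma$ accumulates at the fixed point $0$, the maximum principle for psh functions forces this function to be constant. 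The residuality of $R$ then eliminates the remaining $\log|z_{2}|^{2}$-coefficient, so $R=0$. Conversely, $\log|z_{2}|^{2}$ is $\gamma$-equivariant modulo a constant, and $i\partial\bar\partial\log|z_{2}|^{2}$ descends to a positive exact multiple of $[C]$ on $X$. The main obstacle in (1) is this rigidity step, where the non-diagonal $\lambda z_{2}^{m}$-term is essential (it prevents the existence of a second invariant axis or foliation that would support more currents).

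\textbf{Part (2).} For the diagonal contraction $\gamma(z_{1},z_{2})=(\alpha_{1}z_{1},\alpha_{2}z_{2})$, Siu decomposition yields $\widetilde T=\nu_{1}[\{z_{1}=0\}]+\nu_{2}[\{z_{2}=0\}]+R$. Each $\log|z_{j}|^{2}$ is $\gamma$-equivariant modulo a constant, hence $[C_{j}]$ and in particular $[C_{1}\cup C_{2}]$ are exact on $X$, giving the last assertion. For the $L^{\infty}_{1}$-regularity of $\partial\bar\partial$-potentials of $T$ on $X\setminus(C_{1}\cup C_{2})$, subtract suitable multiples of $\log|z_{j}|^{2}$ from $u$ to remove the divisorial part; the resulting function $v$ is a $\gamma$-equivariant psh potential of $R$ on $(\C^{*})^{2}$. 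Averaging $v$ over the standard $T^{2}$-action on $(\C^{*})^{2}$ (which commutes with $\gamma$) reduces the analysis to a convex function $F$ on $\R^{2}$ satisfying $F(x+\log|\alpha_{1}|^{2},y+\log|\alpha_{2}|^{2})=F(x,y)+c$; any such convex $F$ is automatically locally Lipschitz on $\R^{2}$. Transferring this Lipschitz bound back to $v$ using convexity of $v$ along the torus orbits then yields the claimed $L^{\infty}_{1}$-regularity on $X\setminus(C_{1}\cup C_{2})$, and hence $L^{2}_{-1}(T|X)\supset X\setminus(C_{1}\cup C_{2})$.

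\textbf{Sharp examples.} To achieve $L^{2}_{-1}(T|X)=X\setminus(C_{1}\cup C_{2})$ together with vanishing Lelong numbers at every point of $X$, I construct an explicit example: take a smooth strictly convex function $f$ on $\R^{2}$ with the quasi-periodicity $f(x+\log|\alpha_{1}|^{2},y+\log|\alpha_{2}|^{2})=f(x,y)+c$ and sublinear decay $f(x,y_{0})\sim-|x|^{a}$ as $x\to-\infty$ (similarly in $y$) for some fixed $a\in[\tfrac{1}{2},1)$, and set $T=i\partial\bar\partial f(\log|z_{1}|^{2},\log|z_{2}|^{2})$. The bound $a<1$ guarantees vanishing Lelong numbers at every point of $C_{1}\cup C_{2}$, while $a\geq\tfrac{1}{2}$ ensures $|du|^{2}$ fails to be locally integrable near any point of $C_{1}\cup C_{2}$, placing the $L^{2}_{-1}$-locus of $T$ precisely at $X\setminus(C_{1}\cup C_{2})$. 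The main obstacle here is calibrating the two requirements simultaneously, which requires careful local integral estimates in polar coordinates around the elliptic curves.
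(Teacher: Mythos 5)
Your proposal has genuine gaps, the most serious being in the main claim of part (2). The paper's proof rests on a structure theorem (essentially Harvey--Lawson, Theorem 58, restated as Lemma \ref{lem:Hopf}): for a non-elliptic class~$1$ Hopf surface the residual current $T'$ of the Siu decomposition annihilates the closed positive form $\tilde\Omega$, hence is a foliation current, and because the leaves of the foliation are \emph{dense in the $3$-torus fibers} (this is exactly where the non-ellipticity hypothesis $\alpha_1^{k_1}\neq\alpha_2^{k_2}$ enters) its coefficient depends only on the variable $u=|z_1|^2/|z_2|^{2r}$; one then solves the ODE $uh'+h=f$ explicitly and gets a potential with locally bounded derivative, i.e.\ $L^\infty_1$. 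Your substitute --- averaging the equivariant potential $v$ over the $T^2$-action and then ``transferring the Lipschitz bound back to $v$ using convexity of $v$ along the torus orbits'' --- does not work: $v$ is not convex (nor anything useful) along the compact torus orbits, and a Lipschitz bound on the $T^2$-average of $v$ gives no pointwise or gradient control on $v$ itself. Without the density-of-leaves argument you have no reason why an \emph{arbitrary} exact positive current should be torus-invariant, so the claimed $L^\infty_1$ regularity for all $T$ is not established. A related symptom appears in your part (1): the paper simply cites Harvey--Lawson, Theorem 69, whereas your sketch reduces to ``a $\gamma$-invariant usc function on $\C^2$ is constant by the maximum principle''. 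But the correction by a multiple of $\log|z_2|^2$ needed to pass from equivariance ($u\circ\gamma=u+c$) to invariance generally has the wrong sign and destroys plurisubharmonicity, and, as stated, your argument never really uses $\lambda\neq0$: applied verbatim to the diagonal (class $1$) case it would ``prove'' that no residual currents exist, which is false. So the rigidity step, which you yourself call the heart of the argument, is missing.

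There is also a concrete flaw in your sharp example. A smooth \emph{strictly} convex $f$ on $\R^2$ cannot satisfy $f(x+\log|\alpha_1|^2,\,y+\log|\alpha_2|^2)=f(x,y)+c$: convexity plus this quasi-periodicity forces $f$ to be affine along the direction $(\log|\alpha_1|^2,\log|\alpha_2|^2)$, so such an $f$ does not exist; moreover a single convex function cannot decay sublinearly to $-\infty$ near \emph{both} curves, since sublinear decay as $x\to-\infty$ forces at least linear growth in the opposite direction along the invariant variable. The fix is what the paper does: work with functions of the single invariant variable $u=|z_1|^2/|z_2|^{2r}$ (respectively $u'=|z_2|^2/|z_1|^{2r'}$), construct one current per curve --- the paper takes $f(u)=-(a+1)u^a$ with $a\in\left]-2,-\frac32\right]$, your Kiselman-type ansatz $-(-\log u)^a$ with $a\in\left[\frac12,1\right[$ works just as well and your exponent calibration (finite mass and zero Lelong numbers versus failure of $L^2$ of the gradient) is correct in that one-variable setting --- and then take the sum of the two currents to fail $L^2_{-1}$ along $C_1\cup C_2$ simultaneously. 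The exactness of $[C_1\cup C_2]$ and the inclusion $L^2_{-1}(T|X)\supset X\setminus(C_1\cup C_2)$ are fine in outline, but as written both the regularity statement for general $T$ and the example need to be repaired along the lines above.
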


\begin{Rem} 
The above Propositions immediately imply that Conjecture \ref{conj:Lelong refined} holds for minimal parabolic surfaces. Its extension to the non-minimal case needs a bit of care since the condition $T\in L^2_{-1}(X)$ does not behave well under blow-up. Namely, if $p:Y\to {\mathbb B}$ is the blow-up of the origin in the unit ball ${\mathbb B}$ in ${\mathbb C}^2$ and if $T=i\partial\bar\partial\varphi$ is a closed 
positive current on the unit ball in ${\mathbb C}^2$ such that $\partial\varphi\in L^2(\mathbb B)$, then it is not true in general that $\partial p^*\varphi\in L^2(Y)$. 
 See \cite {CoGuZe}, Section 4.2 for a detailed discussion.
 
However on parabolic surfaces the only interesting case where blowing-up may occur in a point away from null-homologuous divisors is the case of Hopf surfaces of class $1$ and in that case we show that exact positive $(1,1)$-currents $T$  have local $\partial\bar\partial$-potentials in $L^{\infty}_{1}$, a property which continues to hold for the pulled-back potentials after blowing up.
These will be therefore locally in $L^{2}_{1}$. 
\end{Rem}

We now give the arguments for the above statements.

Proposition \ref{prop:Enoki} is exactly \cite[Theorem 10 (b)]{TomaCurrents}. 

\begin{proof}[Proof of Proposition \ref{prop:elliptic}]
Let $X$ be a non-k\"ahlerian minimal elliptic surface and let $\pi:X\to Y$ be its elliptic fibration. Here $Y$ is a smooth curve and $\pi$ is known to be a quasi-bundle, which means that all its smooth fibers are isomorphic to one another and its singular fibers are multiples of smooth elliptic curves, see  \cite[Proposition 3.17]{Brinzanescu} for a proof. 
It is immediate to see that the integration current along any fiber of $\pi$ is exact since the De Rham cohomology class of such a fibre vanishes. 

Let $F$ be a fibre of $X\to Y$. We will now show that  positive exact $(1,1)$-currents exist on $X$ which are not  in $L^2_{-1}$ along $F$, are smooth on $X\setminus F$ and have vanishing Lelong numbers at all points of $X$. The idea is to adapt an example of Kiselman \cite[Example 3.1]{Kiselman} to our situation. We consider the function $$z\mapsto u(z):=-(-\log |z|^2)^{\frac{1}{2}}$$
on a small disc $D$ centred at the origin of $\C$ and compose it with $\pi$, where we suppose that the disc $D$ is embedded as an open subset of $Y$, that our fibre $F$ lies over $0\in D$ and that in suitable local coordinates $\pi$ is given by $(w,z)\mapsto z^n$, for some $n\in\N\setminus\{0\}$ at points over $0$, cf.  \cite[p.207]{BHPV} with $n>1$ when the fibre $F$ is singular. There will be no restriction of generality to consider only the case of a smooth fiber which is what we will do for simplicity. We write the function $u$ as $u(z)=\phi(\log|z|^2)$, where $\phi(t):= -(-t)^{\frac{1}{2}}$. Then $\phi$ is increasing and convex on $\R_{<0}$, so $u$  is subharmonic on $D$. The current
$$i\partial\bar\partial u= \frac{i\d z\wedge\d \bar z}{4 |z|^2 (-\log |z|^2)^{\frac{3}{2}}}$$
is positive on $D$, smooth on $D\setminus\{0\}$ and not in $L^2_{-1}(D)$. We may extend it as a (closed) $(1,1)$-current on $Y$ which remains  positive and smooth except at $0$. Then its pullback $T$ to $X$ has the desired properties. Indeed, direct computation shows that with respect to the coordinate systems $(w,z)$ the Kiselman numbers $\nu(T,x,(1,1))$ vanish for all $x\in F$ and these in turn are equal to the classical Lelong numbers $\nu(T,x)$, \cite[III, Corollary 7.3]{DemaillyBook}. 
\end{proof}

The first part of Proposition \ref{prop:Hopf} is contained in \cite[Theorem 69]{HarveyLawson}. Before we  prove the second part we first recall some facts on Hopf surfaces of class $1$ from \cite{KodairaStructureII} and from \cite{HarveyLawson}, see also \cite[Section 3.1.2]{ChiTo2}. 

A  {\em Hopf surface} is a compact complex surface whose universal covering space is isomorphic to 
$\C^2\setminus \{0\}$. It is called {\em primary} if its fundamental group is infinite cyclic, and {\em secondary} otherwise.  Every secondary Hopf surface admits a finite unramified cover which is a primary Hopf surface. The  fundamental group of a  primary Hopf surface is generated by a {\em contraction} $g:\C^2\setminus \{0\}\to\C^2\setminus \{0\}$ which for suitable global holomorphic coordinates $(z_1,z_2)$ on $\C^2$ has the following  form
\begin{equation}\label{eq:contraction2}
g(z_1,z_2)=(\alpha_1z_1+\lambda z_2^m, \alpha_2 z_2),
\end{equation}
where $m\in\Z_{>0}$, $\alpha_1,\alpha_2,\lambda\in\C$ and 
$$(\alpha_1-\alpha_2^m)\lambda=0, \ 0<|\alpha_1|\le|\alpha_2|<1.$$
 A primary Hopf surface $(\C^2\setminus \{0\})/\langle g\rangle$ with $g$ as above is elliptic if and only if $\lambda=0$ and $\alpha_1^{k_1}=\alpha_2^{k_2}$ for some positive integers $k_1$, $k_2$. 
 A non-elliptic Hopf surface $X$ is  {\em of class $1$} if for its primary Hopf cover the coefficient $\lambda$ in \eqref{eq:contraction2} vanishes. In this case 
 its fundamental group is isomorphic to $\Z\times(\Z/l\Z)$ where the direct factor $\Z$ is generated by a contraction $g$ of the form  \eqref{eq:contraction2} and the finite cyclic group $\Z/l\Z$ is generated by an automorphism of $\C^2\setminus \{0\}$ of the form
$$(z_1,z_2)\mapsto(\epsilon_1 z_1, \epsilon_2 z_2),$$
where $\epsilon_1$, $\epsilon_2$ are primitive $l$-th roots of unity. 
Thus $X$ admits an unramified cyclic covering of degree $l$ by the primary Hopf surface $(\C^2\setminus \{0\})/\langle g\rangle$.  A Hopf surface of class $1$ has exactly two irreducible compact curves $E_1$, $E_2$ which are the images by the universal covering map of the  coordinate axes $\{ z_2=0\}$ and $\{ z_1=0\}$ in $\C^2\setminus \{0\}$. The curves $E_1$, $E_2$ are elliptic.

We now restrict our attention to the  primary non-elliptic  Hopf surfaces of class $1$. It will be clear that the properties of exact positive $(1,1)$-currents on secondary Hopf surfaces will be induced by those on their primary Hopf covers. 
So let $X$ be a primary non-elliptic Hopf surface of class $1$ given by a contraction $g$ of the form
$$g(z_1,z_2)=(\alpha_1z_1, \alpha_2 z_2),$$
with $0<|\alpha_1|\le|\alpha_2|<1$ and such that $\alpha_1^{k_1}\ne\alpha_2^{k_2}$ $\forall (k_1,k_2)\in (\N\setminus \{0\})^2$. 
Following \cite{HarveyLawson} we set 
$$r=\frac{\log|\alpha_1|}{\log|\alpha_2|}, \ r'=\frac{1}{r},$$  
$$\phi, \phi': \C^2\setminus \{0\}\to \R, \ \phi(z_1,z_2):=\log(|z_1|^2+|z_2|^{2r}),  \ \phi'(z_1,z_2):=\log(|z_2|^2+|z_1|^{2r'}),$$
$$\eta:=z_2\d z_1-rz_1\d z_2, \ \eta':=z_1\d z_2-r'z_2\d z_1=-r'\eta,$$
$$\Omega:=i\partial\bar\partial\phi=
\frac{|z_2|^{2(r-1)}}{(|z_1|^2+|z_2|^{2r})^2}i\eta\wedge\bar\eta, \ \Omega':=i\partial\bar\partial\phi'=
\frac{|z_1|^{2(r'-1)}}{(|z_2|^2+|z_1|^{2r'})^2}i\eta'\wedge\bar\eta',$$
$$V:=rz_1\frac{\partial}{\partial z_1}-
z_2\frac{\partial}{\partial z_2},$$
$$\pi, \pi':X \to [0,1], \ 
\pi(z_1,z_2):=\frac{|z_1|^2}{|z_1|^2+|z_2|^{2r}},  \ 
\pi'(z_1,z_2):=\frac{|z_2|^2}{|z_2|^2+|z_1|^{2r'}},$$ 
$$\tilde \Omega:=(\psi\circ\pi)\Omega+(\psi\circ\pi')\Omega',$$
where $\psi:[0,1]\to[0,1]$ is smooth and equals $1$ in a neighbourhood of $0$ and $0$ in a neighbourhood of $1$. The forms $\Omega$, $\Omega'$ might not be smooth along $E_1$ and respectively along $E_2$ but $ \tilde \Omega$ is a smooth positive $\d$-closed $(1,1)$-form on $X$ without zeroes on $X$. 
Moreover the holomorphic vector field $V$ defines a holomorphic foliation $\cF$ on $X$, which coincides with the complex foliation defined by $\tilde \Omega$ ( whose leaves are by definition tangent to $\ker(\tilde\Omega)$). We further introduce coordinates $(\rho,\theta_1, \theta_2, \theta_3) $ on $\C^*\times\C^*$ by setting
$$z_1=\rho e^{r\theta_3+i\theta_1}, \ z_2= e^{\theta_3+i\theta_2}$$
with respect to which we get that
$$g(\rho,\theta_1, \theta_2, \theta_3) =(\rho,\theta_1+\Arg(\alpha_1), \theta_2+\Arg(\alpha_2), \theta_3+\log|\alpha_2|),$$
the leaves of the foliation $\cF$ are given by the equations
$$\rho=\mbox{constant}, \ \theta_1-r\theta_2=\mbox{constant},$$
$$\pi([\rho,\theta_1, \theta_2, \theta_3]) =\frac{\rho^2}{\rho^2+1},$$ and its fibers are the $3$-dimensional real tori $\T=\R^3/\langle\tau_1,\tau_2,\varphi\rangle$, where $\langle\tau_1,\tau_2,\varphi\rangle$ is the translation group generated by $\tau_1,\tau_2,\varphi$ over $\Z$ and $\tau_1(\theta_1, \theta_2, \theta_3)=(\theta_1+2\pi, \theta_2, \theta_3)$, 
$\tau_2(\theta_1, \theta_2, \theta_3)=(\theta_1, \theta_2+2\pi, \theta_3)$, 
$\displaystyle{\tau_3(\theta_1, \theta_2, \theta_3)=(\theta_1, \theta_2, \theta_3+2\pi)}$, $\displaystyle{\varphi=\tau_3+\frac{\Arg(\alpha_1)}{2\pi}\tau_1+\frac{\Arg(\alpha_2)}{2\pi}\tau_2}.$ The irrationality assumption $\alpha_1^{k_1}\ne\alpha_2^{k_2}$ $\forall (k_1,k_2)\in (\N\setminus \{0\})^2$ on $(\alpha_1,\alpha_2)$ implies that the leaves of $\cF$ are dense in these tori. We also see that by means of the coordinate functions $(\rho,\theta_1, \theta_2, \theta_3)$ we get a diffeomorphism $X\setminus(E_1\cup E_2)\to\R_{>0}\times\T$.

We next state and prove a lemma which is essentially a rephrasing of \cite[Theorem 58]{HarveyLawson}.

\begin{Lemma}\label{lem:Hopf}
Let $X$ be a non-elliptic Hopf surface of class $1$ and let  $T$ be  a positive $\d$-closed $(1,1)$-current on $X$. If $T'$ is the residual current of $T$ with respect to the Siu decomposition 
$$T=c_1[E_1]+c_2[E_2]+T',$$
then $T'$ is positive $\d$-exact of type $(1,1)$ and there exists a non-negative generalized function $f$ on $]0,1[$ such that on $X\setminus(E_1\cup E_2)$ one has
$$T'=\pi^*(f)\Omega,$$
where $\pi^*(f)$ is the pull-back of  $f$ through the submersion $\pi|_{X\setminus(E_1\cup E_2)}:X\setminus(E_1\cup E_2)\to]0,1[.$ 
\end{Lemma}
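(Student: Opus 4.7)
The plan is to combine three ingredients: the Siu decomposition of $T$, the vanishing of $b_2$ on Hopf surfaces, and a direct appeal to the structure theorem \cite[Theorem 58]{HarveyLawson}.

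First I would apply the Siu decomposition to the positive $\d$-closed $(1,1)$-current $T$. Since on a non-elliptic Hopf surface of class $1$ the only compact complex curves are $E_1$ and $E_2$, the decomposition takes the stated form $T = c_1[E_1] + c_2[E_2] + T'$ with $c_1, c_2 \ge 0$ and $T'$ a positive $\d$-closed $(1,1)$-current whose generic Lelong numbers along $E_1$, $E_2$ vanish; in particular $T'$ puts no mass on $E_1 \cup E_2$.

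Next I would deduce $\d$-exactness of $T'$ from topology: a Hopf surface has $b_2(X) = 0$, so every real $\d$-closed $2$-current on $X$ is automatically $\d$-exact in the de Rham sense. Hence $T$, $[E_1]$, and $[E_2]$ are each exact, and so is $T' = T - c_1[E_1] - c_2[E_2]$.

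Finally, the representation $T' = \pi^{*}(f)\,\Omega$ on $X \setminus (E_1\cup E_2)$ would come from \cite[Theorem 58]{HarveyLawson}. Because the leaves of $\cF$ are dense in each $3$-torus fiber $\pi^{-1}(t)$ (by the irrationality assumption $\alpha_1^{k_1} \ne \alpha_2^{k_2}$), any positive $\d$-closed $(1,1)$-current on $X$ without mass on $E_1 \cup E_2$ is necessarily invariant along $\cF$, hence along the leaf closures; this forces it to be a pullback from the leaf space $]0,1[$ via $\pi$ against the natural invariant generator $\Omega$, with the generalized function coefficient $f$ inheriting non-negativity from the positivity of $T'$.

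The principal obstacle is this last step, which is the true content of \cite[Theorem 58]{HarveyLawson}: establishing foliation-invariance of $T'$ from its $\d$-closedness together with the absence of divisorial mass, and then upgrading this to invariance under the transverse $\T^2$-action $(z_1,z_2)\mapsto(e^{is}z_1,e^{it}z_2)$ via density of the leaves. Once such invariance is granted, the reduction to $\pi^{*}(f)\,\Omega$ with $f \ge 0$ on $]0,1[$ is essentially forced, since $\Omega$ is, up to a function of $\pi$, the unique invariant positive closed $(1,1)$-form on $X \setminus (E_1 \cup E_2)$ annihilating the foliation direction $V$.
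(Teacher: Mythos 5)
Your first two steps (Siu decomposition, and exactness from $b_2(X)=0$, i.e.\ $H^2_{DR}(X,\R)=0$) coincide with the paper's argument. The divergence is in the last step, which you yourself flag as the principal obstacle: you essentially outsource it to \cite[Theorem 58]{HarveyLawson}. That is defensible in spirit, since the paper itself calls the lemma ``essentially a rephrasing'' of that theorem, but the paper stresses that a (slight) modification of the argument is needed and reproduces it in full; your sketch of how that argument should go misidentifies the mechanism at two points, so as written there is a genuine gap.

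First, the reason $T'$ is directed by the foliation is \emph{not} the absence of divisorial mass on $E_1\cup E_2$. It is the pairing argument: $T'$ is $\d$-exact while $\tilde\Omega$ is a smooth $\d$-closed semi-positive $(1,1)$-form, nowhere zero and of rank one, so $\int_X T'\wedge\tilde\Omega=0$; positivity of both factors then forces the non-negative measure $T'\wedge\tilde\Omega$ to vanish, and since the kernel of $\tilde\Omega$ is exactly the foliation direction this gives $T'=k\,\tilde\Omega$ for a non-negative generalized function $k$. Second, passing from this to $k=\pi^*(f)$ is not ``essentially forced'' by density of the leaves alone, because $k$ is a priori only a generalized function and leaf-density arguments apply directly only to continuous objects. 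The paper handles this by working in the flat coordinates $(u,\alpha,\theta_2,\theta_3)$, writing $T'=h\,\Omega_1$ with $\Omega_1=\d u\wedge\d\alpha$, and mollifying: the operator $h\mapsto(\d h)\wedge\Omega_1$ has constant coefficients, hence commutes with convolution, so the smooth coefficients $h_\epsilon$ define closed forms, are constant on leaves, hence (by density of the leaves in the $3$-torus fibers of $\pi$ -- note these are $3$-tori, not just orbits of a $\T^2$-action) constant on the fibers; letting $\epsilon\to0$ gives $\partial h/\partial\theta_j=0$ distributionally and then $h=\pi^*(f)$ by Schwartz's theorem. You also conflate ``$T'$ is invariant along $\cF$'' with ``$T'$ is directed by $\cF$''; these are distinct properties, obtained respectively from closedness (via the regularization step) and from positivity plus exactness, and both are needed. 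Supplying these two arguments would turn your outline into the paper's proof.
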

\begin{proof}
Let $T$ and $T'$ as in the statement. Since $H^2_{DR}(X,\R)=0$ it is clear that $T'$ is $\d$-exact. The form $\tilde \Omega $ is $\d$-closed so $T'(\tilde\Omega)=0$ and $T'$ is a positive foliation  current. It follows that $T'$ is of the form $T'=k\tilde\Omega$ where $k$ is a non-negative generalized function on $X$. 

Now the statement to prove being local over $]0,1[$ and since $\d\pi\wedge\tilde\Omega=0$, we may suppose that the support of $T'$ lies over some compact subset $K$ of $]0,1[$. We may also suppose by the construction of $\tilde \Omega$ that over $K$ the forms $\Omega$ and $\tilde \Omega$ coincide. So we are left with the task of checking that $k=\pi^*(f)$ for some non-negative generalized function on $]0,1[$. For the reader's convenience we give here an argument which is only a slight modification of that of \cite[Theorem 58]{HarveyLawson}.

We set with respect to new coordinate functions $y=(u,\alpha,\theta_{2},\theta_{3})$, where $\displaystyle{\zeta=\frac{z_1}{z_2^r}}$, $u=\rho^2=|\zeta|^2$, $\alpha=\theta_1-r\theta_2$, 
$$\Omega_1:=
\frac{i\eta\wedge\bar\eta}{|z_2|^{2(r+1)}}=\d u\wedge\d \alpha=i\d\zeta\wedge\d\bar{\zeta}$$
and $T'=h\Omega_{1}$. Since $\Omega_{1}$ differs from $\tilde \Omega$ by a factor which is a function of $\rho$, it will be enough to show that $h$ is of the form $\pi^*(f)$ for some non-negative generalized function on $]0,1[$. 
Let $\psi_\epsilon$ for $\epsilon>0$ be regularizing kernels on $\R^{4}$ and let $h_\epsilon$ be the regularization of $h$ by means of $\psi_\epsilon$, i.e.
$$h_{\epsilon}(y):=\int_{\R^{4}}\psi_{\epsilon}(x)h(y-x)\d x.$$
It is clear that $h_{\epsilon}$ is invariant with respect to the translation group generated by $\tau_{1},\tau_{2},\phi$ mentioned above and hence descends to $X\setminus(E_1\cup E_2)$.  Moreover the smooth forms $T_{\epsilon}:=h_{\epsilon}\Omega_{1}$ are closed since the differential operator  $h\mapsto \d(h\Omega_{1})=(\d h)\wedge\Omega_{1}$ has constant coefficients with respect to the chosen coordinate functions and thus commutes with regularization. It follows that $h_{\epsilon}$ is constant on the leaves of the foliation $\cF$ defined by $\Omega_{1}$. Since these leaves are dense in the fibers of $\pi$, the continuous functions $h_{\epsilon}$ are constant on these fibers and thus
$\displaystyle{\frac{\partial h_{\epsilon}}{\partial \theta_{j}}=0}$ for $1\le j\le3$. Making $\epsilon$ tend to zero now gives $\displaystyle{\frac{\partial h}{\partial \theta_{j}}=0}$ as well. It follows that $h$ is of the form $\pi^*(f)$ for some non-negative generalized function $f$ on $]0,1[$, cf. \cite[IV.5.Exemple 1]{Schwartz}. 
\end{proof}

\begin{proof}[Proof of Proposition \ref{prop:Hopf}(2)]
Le $T$ be an exact positive $(1,1)$-current on a non-elliptic   Hopf surface $X$ of class $1$ and let  $T'$ be the residual part of its Siu decomposition. We will check that the restriction of $T'$ to $X\setminus(E_1\cup E_2)$, which we denote again by $T'$ has coefficients in $L^\infty_{-1}$. 
For this we use the characterization of such residual currents on $X\setminus(E_1\cup E_2)$ given by Lemma \ref{lem:Hopf}. To simplify computations we make use again of our previous notations $\displaystyle{\zeta=\frac{z_1}{z_2^r}}$, $u=\rho^2=|\zeta|^2$, $\alpha=\theta_1-r\theta_2$ and 
$$\Omega_1:=
\frac{i\eta\wedge\bar\eta}{|z_2|^{2(r+1)}}=\d u\wedge\d \alpha=i\d\zeta\wedge\d\bar{\zeta}.$$
 Then for a non-negative generalized function $f$ on $]0,1[$ we have
 $$T'=f(u)\Omega_1.$$
 We claim that an $L^\infty_1$ function $h_1$ in the variable $u$ exists such that $i\partial\bar\partial h_1= T'$ over $]0,1[$.  Indeed, direct computations give
 $$i\partial\bar\partial h_1= (h_1'+uh_1'')\Omega_1$$
 so setting $h=h_1'$ we are led to solving the equation 
 $$ uh'(u)+h(u)=f(u).$$
 Its general solution is of the form
 $$h(u):=\frac{k_0(u)+C}{u},$$
 where $C$ is some real number and  $\displaystyle{k_0(u):=\int_{u_0}^uf(t)\d t}$ is a fixed primitive of $f$. But $k_0$ is locally bounded so $h$ is in $L^\infty_{loc}(]0,1[)$ showing the first part of  Proposition \ref{prop:Hopf}(2).
 
We now construct an example of a current $T'$ on $X\setminus(E_1\cup E_2)$  as above which extends over the curve $E_1$ as a closed positive current $T$ but such that the trivial extension $T$  no longer has $L^2_{-1}$ coefficients locally at points  of $E_1$. The current $T'$ extends if and only if it is locally of finite mass at points of $E_1$ if and only if the trace measure $T'\wedge\omega$ of $T'$ with respect to any positive $(1,1)$-form on $X\setminus E_2$ extends over $E_1$. 

Set $$\omega:=\frac{i\d z_1\wedge\d\bar{z_1}}{|z_1|^2}+
\frac{i\d z_2\wedge\d\bar{z_2}}{|z_1|^{2r'}}.$$ 
We get 
$$\displaystyle{\omega=\left(\frac{\d u}{u}+2r\d\theta_3\right)\wedge\d\theta_1+\frac{2\d\theta_3\wedge\d\theta_2}{u^{r'}}}$$
hence
$$\frac{\omega\wedge\Omega_1}{\omega^2}= \frac{\left(r^2+\frac{1}{u^{r'}}\right)\d\theta_3\wedge\d\theta_2\wedge\d u\wedge\d\theta_1}{2\frac{\d u}{u^{r'+1}}\wedge\d\theta_1\wedge\d\theta_3\wedge\d\theta_2}$$
and $T'$ has finite mass along $E_1$ if and only if $uf$ has finite mass around $0$ on $[0,1[$. 
Assuming this to be the case we now write down under which conditions $T'$ is locally in $L^2_{-1}$ around points of $E_1$. 
If we write $T'=i\partial\bar\partial h_1$ and $h'=h_1$ as above, then $\d h_1=h\d u$ and its pointwise squared norm is $h^2u\Omega_1\wedge\omega/\omega^2$, so $\d h_1$ will have $L^2$ coefficients if and only if $u^2h^2$ is integrable around $0$. 

If we now take $\displaystyle{a\in\left ]-2,-\frac{3}{2}\right ]} $ and 
$$f:]-1,1[\to \R, \ t\mapsto -(a+1)t^a,$$
we find $h(t)=-t^a$ and the current $T':=f(u)\Omega_1$ has finite mass along $E_1$ but its trivial extension $T$ across $E_1$ has zero Lelong numbers everywhere and is not in $L^2_{-1}$ around any point of $E_1$.
\end{proof}

\section{Special currents in the case $I=0$}

Throughout this section we assume that $X$ is a non-K\"ahler compact complex surface on which all $\d$-exact currents are in $L^2_{-1}(X)$ and satisfy $I(T)=0$. Under these 
assumptions, we show that
 there exists a {\it distinguished} positive $\d$-exact current on $X$.

 Denote by ${\mathcal P}$ the cone of positive $d$-exact $(1,1)$ currents on 
$X$. By the above assumptions, we have ${\mathcal P}\subset L^2_{-1}(X)$ and 
if $T=\partial S\in {\mathcal P}$, then $S$ is a $\bar\partial$-closed $(0,1)$ form 
with $L^2$ coefficients and from the condition $I(T)=0$ it follows that $i\bar S\wedge S$ is $d$-exact.

Set 
 \begin{equation*}
 {\mathcal C}=\{ T\in{\mathcal P}\vert \int_XT\wedge g=\int_X i\bar S\wedge S\wedge g\}
 \end{equation*}
 where $g$ is a fixed Gauduchon metric on $X$. Note that the set ${\mathcal C}$ does not depend on $g$. In fact, $T$ and $i\bar S\wedge S$, being $d$-exact, they are cohomologous to a multiple of $\tau$ in the Bott-Chern group $H^{1,1}_{BC}(X,{\mathbb R})$, hence there are $c,c'>0$ and $\lambda$ and $\lambda'$ quasi-plurisubharmonic functions sunch that $T=c\tau+i\partial\bar\partial\lambda$ and $i\bar S\wedge S=c'\tau+i\partial\bar\partial\lambda'$. Then $T$ is in ${\mathcal C}$ iff $c=c'$.

 \begin{Lemma}
 There exists $C$ a positive constant which depends on $g$ such that
 \begin{equation*}
 \int_XT\wedge g\leq C, \forall\, T\in {\mathcal C}
 \end{equation*}
 \end{Lemma}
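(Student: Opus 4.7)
The plan is to bootstrap a linear Cauchy--Schwarz estimate on $\int_{X}T\wedge g$ into a quadratic inequality by exploiting the defining relation of $\mathcal{C}$.

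First I would fix $T=\partial S\in\mathcal{C}$ with $S$ a primitive $(0,1)$-current satisfying $\bar\partial S=0$; by hypothesis $S\in L^{2}(X)$. Since $X$ has complex dimension $2$, the form $S\wedge\bar\partial g$ has bidegree $(1,3)=0$, so Stokes' theorem applied to the $(1,2)$-current $S\wedge g$ yields
\begin{equation*}
\int_{X}T\wedge g=\int_{X}\partial S\wedge g=\int_{X}S\wedge\partial g.
\end{equation*}
Note that the Gauduchon hypothesis on $g$ is not used in this identity; it only intervenes in making $\mathcal{C}$ independent of $g$.

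Next, I would estimate the right-hand side by Cauchy--Schwarz. Using the Hodge star $\ast$ of $g$, write $\partial g=\ast\bar\beta$ for the unique smooth $(0,1)$-form $\beta$; then $\int_{X}S\wedge\partial g=\langle S,\beta\rangle_{L^{2}(g)}$, and $\|\beta\|_{L^{2}(g)}=\|\partial g\|_{L^{2}(g)}$ since $\ast$ is an $L^{2}$-isometry. Hence
\begin{equation*}
\int_{X}T\wedge g\leq \|S\|_{L^{2}(g)}\cdot\|\partial g\|_{L^{2}(g)}.
\end{equation*}

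The final step is the self-improvement. The pointwise identity $|S|_{g}^{2}\,dV_{g}=c\cdot i\bar{S}\wedge S\wedge g$ (for a universal normalization constant $c>0$) together with the defining equality of $\mathcal{C}$ gives $\|S\|_{L^{2}(g)}^{2}=c\int_{X}T\wedge g$. Feeding this into the preceding display forces $\|S\|_{L^{2}(g)}\leq c\,\|\partial g\|_{L^{2}(g)}$, and therefore
\begin{equation*}
\int_{X}T\wedge g\leq c\,\|\partial g\|_{L^{2}(g)}^{2},
\end{equation*}
a constant depending only on $g$. I do not anticipate a conceptual obstacle; the only care needed is bookkeeping, namely choosing consistent normalizations for the pointwise $L^{2}$-norm on $(0,1)$-forms, the volume $dV_{g}$, and the positive $(2,2)$-form $i\bar{S}\wedge S\wedge g$ so that the identity $\|S\|_{L^{2}(g)}^{2}=c\int_{X}i\bar{S}\wedge S\wedge g$ is correct. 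The conceptual engine is the interplay between a linear Cauchy--Schwarz bound in $\|S\|_{L^{2}(g)}$ and the quadratic identity coming from membership in $\mathcal{C}$.
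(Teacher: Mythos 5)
Your proposal is correct and follows essentially the same route as the paper: integrate by parts to get $\int_X T\wedge g=\int_X S\wedge\partial g$, apply Cauchy--Schwarz to bound this by $\|S\|_{L^2}\|\partial g\|_{L^2}$, and use the defining equality of $\mathcal{C}$ (with $\|S\|_{L^2}^2$ comparable to $\int_X i\bar S\wedge S\wedge g$) to turn the linear bound into $\int_X T\wedge g\leq C$. The only extra detail you add is the explicit Hodge-star bookkeeping for the normalization constant, which the paper absorbs into $C_1$.
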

 \begin{proof}
 Let $T=\partial S\in {\mathcal P}$. Then, from H\"older's inequality, it follows that
 \begin{equation*}
 \int_XT\wedge g=\int_X\partial S\wedge g=\int_XS\wedge \partial g\leq \vert\vert S\vert\vert_{L^2}\vert\vert\partial g\vert\vert_{L^2}=C_1
 \left (\int_Xi\bar S\wedge S\wedge g\right )^{\frac 12}
 \end{equation*}
 and if $T\in {\mathcal C}$, then $\int_XT\wedge g=\int i\bar S\wedge S\wedge g$, hence $\int_X T\wedge g\leq C_1^2$
 \end{proof}
 
 Let $c=\sup \{\int_XT\wedge g\vert T\in {\mathcal C}\}<\infty$.
 
 \begin{Prop}
 There exists a unique $T\in {\mathcal C}$ such that $\int_XT\wedge g=c$.
 \end{Prop}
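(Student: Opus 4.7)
The strategy is to extract a weak $L^2$ limit of a maximizing sequence of primitives and then use a one-parameter scaling to force the limit back into $\cC$.

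First I take a maximizing sequence $T_n=\partial S_n\in\cC$ with $\int_X T_n\wedge g\to c$. The defining condition of $\cC$ gives $\int_X i\bar S_n\wedge S_n\wedge g=\int_X T_n\wedge g$, so $\{S_n\}$ is bounded in $L^2$ with respect to the Hermitian inner product $\langle u,v\rangle_g:=\int_X i\bar u\wedge v\wedge g$. After passing to a subsequence, $S_n\rightharpoonup S$ weakly in $L^2$; consequently $T_n\to T:=\partial S$ as currents, $\bar\partial S=0$ in the limit, and $T\in\cP$ since positive currents form a weakly closed cone. Testing against the smooth form $g$ yields $\int_X T\wedge g=c$, while weak lower semicontinuity of the norm gives $\int_X i\bar S\wedge S\wedge g\leq c$.

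To upgrade $T\in\cP$ to $T\in\cC$ I would use a scaling argument. Suppose for contradiction that $b:=\int_X i\bar S\wedge S\wedge g<c$ and set $\lambda:=c/b>1$. Then $\lambda T=\partial(\lambda S)\in\cP$ satisfies
$$\int_X\lambda T\wedge g=\lambda c,\qquad\int_X i\overline{\lambda S}\wedge\lambda S\wedge g=\lambda^2 b=\lambda c,$$
so $\lambda T\in\cC$ with $\int_X\lambda T\wedge g=\lambda c>c$, contradicting the definition of $c$. Hence $b=c$ and $T$ realizes the supremum.

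For uniqueness, let $T_j=\partial S_j\in\cC$ ($j=1,2$) both attain the value $c$, and set $T:=(T_1+T_2)/2=\partial((S_1+S_2)/2)\in\cP$. Then $\int_X T\wedge g=c$ and expanding the Hermitian norm yields
$$\left\|\tfrac{S_1+S_2}{2}\right\|_g^2=\tfrac{c}{2}+\tfrac{1}{2}\mathrm{Re}\langle S_1,S_2\rangle_g\leq c$$
by Cauchy-Schwarz. The same scaling argument then forces $T\in\cC$, so equality must hold throughout; this gives $\langle S_1,S_2\rangle_g=c=\|S_1\|_g\|S_2\|_g$, and the equality case of Cauchy-Schwarz yields $S_1=\mu S_2$ with $|\mu|=1$ and $\bar\mu c=c$, i.e.\ $\mu=1$, whence $T_1=T_2$.

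The main obstacle is that the quadratic functional $b(T)=\int_X i\bar S\wedge S\wedge g$ is only lower semicontinuous under weak $L^2$ convergence, so one cannot pass the defining equality of $\cC$ directly through the limit; the scaling step replaces continuity by the maximality of $c$, converting any gap $b(T)<c$ into a contradiction.
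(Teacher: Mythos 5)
Your proof is correct and follows essentially the same route as the paper's: extract weak limits from a maximizing sequence of primitives, use lower semicontinuity of the $L^2$ norm together with the rescaling $\lambda=c/b$ to force the limit back into $\mathcal{C}$, and obtain uniqueness from the Hilbert space structure (the paper via the parallelogram-type inequality $\int_X i(\bar S_1-\bar S_2)\wedge(S_1-S_2)\wedge g\le 0$, you via the equality case of Cauchy--Schwarz, which is the same computation). The only point to add is the degenerate case $b=0$, which your division by $b$ silently excludes; as in the paper, it is dismissed in one line since $S=0$ would give $T=0$ and hence $c=0$.
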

 \begin{proof}
 Let $T_n\in {\mathcal C}$ such that 
$\int_XT_n\wedge g\to c$. Let 
$T_n=\partial S_n$, where 
$S_n\in L^2_{0,1}(X)$. We can assume that 
$T_n\to T$ weakly (as distributions), and that 
$S_n\to S\in L^2_{0,1}(X)$ weakly (in the Hilbert space 
$L^2_{0,1}(X)$). Then $\int_XT\wedge g=c$, and we want to show that 
$\int_Xi\bar S\wedge S\wedge g=c$. Since $S_n\to S$ weakly in $L^2_{0,1}(X)$, it follows that
 \begin{equation*}
 \int_X i\bar S\wedge S\wedge g= \vert\vert S\vert\vert^2_{L^2}\leq \liminf \vert\vert S_n\vert\vert^2_{L^2}=c
 \end{equation*}
 by the weakly lower semicontinuity of the norm. 
 Set $\displaystyle{c'=\int_Xi\bar S\wedge S\wedge g}$, so that the above inequality implies $c'\leq c$. If $c'=0$, then $S=0$, hence $c=0$, contradiction. Therefore $c'\neq 0$, and if we consider the current $\displaystyle{T'=\frac {c}{c'}T}$, then $T'\in {\mathcal C}$ so $\displaystyle{\int_XT'\wedge g=\frac{c^2}{c'}\leq c}$. This implies that $c'\geq c$, hence $T\in {\mathcal C}$.
 
 In order to prove the uniqueness, consider $T_1=\partial S_1, T_2=\partial S_2\in {\mathcal C}$ such that 
\begin{equation*}
\int_XT_1\wedge g=\int_Xi\bar S_1\wedge S_1\wedge g=\int_X T_2\wedge g=\int_Xi\bar S_2\wedge S_2\wedge g=c
\end{equation*}
and we want to prove that $S_1=S_2$.

Let $\displaystyle{S=\frac 12 (S_1+S_2)}$, $T=\partial S$ and
\begin{equation*}
\lambda=\frac{\int_X T\wedge g}{\int_Xi\bar S\wedge S\wedge g}
\end{equation*}
such that $\lambda T\in {\mathcal C}$. Then $\int_X\lambda T\wedge g\leq c$ and this implies 
\begin{equation*}
\int_Xi\bar S\wedge S\wedge g\geq c=\frac 12\int_Xi\bar S_1\wedge S_1\wedge g+\frac 12\bar S_2\wedge S_2\wedge g
\end{equation*}
therefore
\begin{equation*}
\int_Xi(\bar S_1-\bar S_2)\wedge (S_1-S_2)\wedge g\leq 0
\end{equation*}
Hence the $L^2$ norm of $S_1-S_2$ is $0$, so $S_1=S_2$.
 \end{proof}

We expect the current $T$ in ${\mathcal C}$ for which which $\int_XT\wedge g=c$ to have some additional properties, for instance we think that it should satisfy the equation $T=i\bar S\wedge S$.

\section{Gauduchon currents on non-K\"ahler surfaces}

In this section, we prove that, given a $d$-exact positive current $T$ in $L_2^{-1}(X)$ such that $I(T)\neq0$, then there exists a certain pluriharmonic positive current that dominates some Gauduchon metric. One may call such a current a {\em Gauduchon current}.

\begin{Prop}\label{gc}
Let $X$ be a non-K\"ahler surface, $g$ a 
fixed Gauduchon metric on $X$, $T'$ a nef positive pluriharmonic
 current such that $\int_X T'\wedge\tau>0$, and $T$ a nef non-zero $d$-exact 
positive current.  Then there exist $\varepsilon>0$ and
 $\chi\in {\mathcal D}'(X,{\mathbb R})$ a 
real distribution such that $T'+T+i\partial\bar\partial\chi\geq\varepsilon g$

\end{Prop}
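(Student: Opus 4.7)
My strategy is a Lamari-type duality together with a compactness argument to produce a contradiction if the sought inequality failed. Because $i\partial\bar\partial\chi$ integrates trivially against any positive $\partial\bar\partial$-closed (Gauduchon) current, the existence of $\chi$ and $\varepsilon>0$ with $T'+T+i\partial\bar\partial\chi\geq\varepsilon g$ is equivalent, by the current version of Lamari's duality, to the existence of some $\varepsilon>0$ such that
\[
\int_X (T'+T)\wedge S\ \geq\ \varepsilon\int_X g\wedge S
\]
for every Gauduchon positive current $S$. Normalizing $\int_X g\wedge S=1$, it suffices to show that the left-hand side is bounded below by a strictly positive constant on the weakly compact convex set $\mathcal{G}_1$ of normalized Gauduchon currents.

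Non-negativity follows from the nefness of $T$ and $T'$. For each $\delta>0$, smooth representatives $T_\delta\in [T]_{BC}$ and $T'_\delta=T'+i\partial\bar\partial\psi_\delta$ can be chosen with $T_\delta,T'_\delta\geq -\delta g$, and invariance of the pairings under $i\partial\bar\partial$-modifications (valid against Gauduchon $S$) yields $\int_X T\wedge S=\int_X T_\delta\wedge S\geq -\delta$ and similarly for $T'$; letting $\delta\to 0$ gives $\int_X(T'+T)\wedge S\geq 0$ on $\mathcal{G}_1$. Note also that since $T$ is non-zero, $d$-exact and positive, $[T]_{BC}=c[\tau]_{BC}$ for some $c>0$ (if $c=0$ then $T=i\partial\bar\partial\varphi$ with $\varphi$ psh on a compact surface, hence constant, forcing $T=0$), so $\int_X T\wedge S=c\int_X\tau\wedge S$.

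Suppose now, aiming at a contradiction, that $\inf_{\mathcal{G}_1}\int_X(T'+T)\wedge S=0$: there is a minimizing sequence $S_n$ with subsequential weak limit $S_\infty\in\mathcal{G}_1$, non-zero since $\int_X g\wedge S_\infty=1$. The functionals $S\mapsto\int_X T\wedge S$ and $S\mapsto\int_X T'\wedge S$ are weakly continuous via the smoothings, e.g.\ $\int_X T'\wedge S_n=\int_X T'_\delta\wedge S_n\to\int_X T'_\delta\wedge S_\infty=\int_X T'\wedge S_\infty$ for any fixed $\delta$, and likewise for $T$; hence $\int_X(T'+T)\wedge S_\infty=0$, and by non-negativity of each summand, $\int_X T'\wedge S_\infty=0$ and $\int_X\tau\wedge S_\infty=0$.

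The principal obstacle is to rule out such an $S_\infty$ using the hypothesis $\int_X T'\wedge\tau>0$. One natural witness is the family of positive measures $\mu_\delta:=(T'_\delta+\delta g)\wedge S_\infty$, whose total masses equal $\delta\int_X g\wedge S_\infty=\delta\to 0$, juxtaposed with the persistent positivity
\[
\int_X(T'_\delta+\delta g)\wedge\tau_\delta\ =\ \int_X T'\wedge\tau+\delta\int_X g\wedge\tau\ \longrightarrow\ \int_X T'\wedge\tau\ >\ 0,
\]
where $\tau_\delta\in[\tau]_{BC}$ are nef smoothings with $\tau_\delta\geq -\delta g$. The resulting asymmetry---vanishing pairing against $S_\infty$ versus a uniform strictly positive pairing against $\tau$, for the same family of smooth positive forms $T'_\delta+\delta g$---should, via a Cauchy--Schwarz-type inequality on bilinear integrals of positive $(1,1)$-currents together with the one-dimensionality of $\ker j$, force $S_\infty=0$, contradicting the normalization. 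Formalizing this mass/density comparison is the technical heart of the proof.
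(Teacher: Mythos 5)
Your setup is the same as the paper's: reduce via Lamari's duality (Lemme 1.4 of \cite{LamariJMPA}, extended to currents) to a lower bound on $\int_X(T'+T)\wedge h$ over normalized Gauduchon metrics, argue by contradiction with a minimizing sequence, extract a weak limit, and conclude that the limit $S_\infty$ is a non-zero positive $i\partial\bar\partial$-closed current with $\int_X\tau\wedge S_\infty=0$ and $\int_X T'\wedge S_\infty=0$. Up to that point the argument is essentially correct (modulo the minor point that Lamari's criterion is tested against smooth Gauduchon metrics rather than arbitrary ``Gauduchon positive currents'', which is how the paper phrases it).

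However, there is a genuine gap at exactly the step you flag as ``the technical heart of the proof'': you do not derive the contradiction, and the route you sketch --- forcing $S_\infty=0$ via a Cauchy--Schwarz-type inequality and the mass computation for $\mu_\delta$ --- is not the right one, because $S_\infty$ need not vanish; nothing in the data so far excludes a non-zero positive $\partial\bar\partial$-closed current pairing to zero with both $\tau$ and $T'$ unless one uses more structure. The paper's contradiction goes through the structure theory of such currents: by Proposition 2.4 of \cite{ChiTo2}, a positive $i\partial\bar\partial$-closed current $R$ with $\int_X R\wedge\tau=0$ is necessarily $d$-exact, hence (by one-dimensionality of $\ker j$) of the form $R=c'\tau+i\partial\bar\partial\lambda'$ with $c'>0$ since $R\neq0$; then the Aeppli--Bott-Chern pairing (Lemma 5.3 of \cite{ChiTo2}) yields
\[
0=\lim_n\int_X T'\wedge h_n=\langle\{T'\}_A,\{R\}_{BC}\rangle=c'\int_X T'\wedge\tau>0,
\]
which is the desired contradiction. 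Without invoking (or reproving) the implication ``$\int_X R\wedge\tau=0\Rightarrow R$ is $d$-exact'' and the well-definedness of the Aeppli--Bott-Chern pairing on these singular classes, your argument does not close.
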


\begin{proof}
First note that Lemme 1.4 in \cite{LamariJMPA}, which is stated for a smooth $(1,1)$-form $\theta$, is also valid in the case of a $(1,1)$-current. Hence, according to this Lemma, it is enough to prove that there exists $\varepsilon>0$ such that $$\int_X(T'+T)\wedge h\geq\varepsilon\int_Xg\wedge h$$ for all Gauduchon metrics $h$ on $X$. Suppose by contradiction, that there exists  a sequence $(h_n)_n$ of Gauduchon metrics on $X$ such that $$\int_X(T'+T)\wedge h_n\leq\frac 1n\int_Xg\wedge h_n,\forall n\geq 1$$ We can normalize the metrics $h_n$ such that $\int_Xh_n\wedge g=1$ and we can assume, without loss of generality, that $(h_n)_n$ converges weakly to a  $i\partial\bar\partial$-closed positive current $R$. It follows that $\int_XT'\wedge h_n\to 0$ and $\int_XT\wedge h_n\to 0$. Since $T$ is nef and $d$-closed, it follows that it is $d$-exact (Proposition 2.4 in \cite{ChiTo2}), and hence there exists $c>0$ and $\lambda$ a quasi-plurisubharmonic function on $X$ such that $T=c\tau+i\partial\bar\partial\lambda$. Therefore $$c\int_XR\wedge\tau=\lim_n c\int_Xh_n\wedge\tau=\lim_n\int_X h_n\wedge T=0$$ and this implies that $R$ is $d$-exact (Proposition 2.4 in \cite{ChiTo2}) and there exists $c'>0$ and $\lambda'$ a quasi-plurisubharmonic function on $X$ such that $R=c'\tau+i\partial\bar\partial\lambda'$. But then $$\langle \{T'\}_A, \{R\}_{BC}\rangle= c'\int_XT'\wedge\tau=\lim_n\int_XT'\wedge h_n=0$$ Here we used Lemma 5.3 in \cite{ChiTo2}. This is a contradiction with the condition $\int_XT'\wedge\tau>0$. From Lamari's result, it follows that there exists $\chi$ a real distribution and $\varepsilon>0$ such that $T'+T+i\partial\bar\partial\chi\geq \varepsilon g$.
\end{proof}

Proposition \ref{gc} implies the following corollary which can be thought of as the singular version of formula $(2.1)$ in \cite{ChiTo}.

\begin{Cor}
Let $X$ be a non K\"ahler surface and $g$ a Gauduchon metric on $X$. Let $T=\partial S$ be a $d$-exact positive current such that $T\in L^2_{-1}(X)$, $I(T)\neq 0$ and $T$ is nef. Then there exists $\chi\in {\mathcal D}'(X,{\mathbb R})$ a real distribution and $\varepsilon>0$ such that $T+i\bar S\wedge S+i\partial\bar\partial \chi\geq \varepsilon g$

\end{Cor}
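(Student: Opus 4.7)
The plan is to apply Proposition \ref{gc} keeping $T$ in its original role and taking $T' := i\bar S\wedge S$ as the auxiliary nef positive pluriharmonic current. The conclusion of Proposition \ref{gc} then reads exactly as the claimed inequality
\[
T + i\bar S\wedge S + i\partial\bar\partial\chi \geq \varepsilon g,
\]
so the entire task reduces to verifying that the hypotheses of Proposition \ref{gc} are satisfied by this choice.

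The assumptions on $T$ are given verbatim. For $T' = i\bar S\wedge S$, positivity is immediate, and $i\partial\bar\partial$-closedness (pluriharmonicity) is recalled in Section \ref{sec:preparations}. The Aeppli class $\{i\bar S\wedge S\}_A$ admits itself as a positive representative, and is therefore nef in the sense used in Proposition \ref{gc}; if a stricter formulation requiring smooth almost-positive representatives is intended, a routine regularization supplies them.

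The key point to check is $\int_X i\bar S\wedge S \wedge \tau > 0$. By hypothesis $I(T)\ne 0$ as a linear form on $\ker(j)$, which by Section \ref{sec:preparations} is one-dimensional and generated by $\{\tau\}_{BC}$, so $I(T)(\tau)=\int_X\tau\wedge i\bar S\wedge S\ne 0$. For the sign I would invoke the duality between the positive cones in Bott-Chern and Aeppli cohomology: the class $\{\tau\}_{BC}$ contains a positive representative $T_0=\tau+i\partial\bar\partial\psi$ by the very choice of $\tau$, and the integration by parts justified by $i\partial\bar\partial(i\bar S\wedge S)=0$ (together with Lemma 5.3 of \cite{ChiTo2}) gives $\int_X T_0\wedge i\bar S\wedge S = \int_X \tau\wedge i\bar S\wedge S$. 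The left-hand side is non-negative as the pairing of two positive currents, and combined with non-vanishing this forces strict positivity.

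The main obstacle I foresee is the bookkeeping around the "nef" hypothesis for $i\bar S\wedge S$: since $S$ is only in $L^2$, the current $i\bar S\wedge S$ has only $L^1$ coefficients, so one must be careful both with the notion of nef and with the distributional integration by parts used to fix the sign. Both points should be addressable by Lemma 5.3 of \cite{ChiTo2} together with a standard smoothing argument; if they are, applying Proposition \ref{gc} closes the argument.
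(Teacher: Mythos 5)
Your proposal matches the paper's proof: the paper likewise applies Proposition \ref{gc} with $T'=i\bar S\wedge S$, citing that this current is positive, $i\partial\bar\partial$-closed and nef (Proposition 2.2 in \cite{ChiTo2}), while the pairing condition $\int_X i\bar S\wedge S\wedge\tau>0$ is exactly the content of $I(T)\neq 0$ as you explain. Your extra remarks on the sign and on the nef bookkeeping only elaborate what the paper leaves implicit, so the argument is essentially the same.
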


\begin{proof}
Indeed, the current $i\bar S\wedge S$ is $i\partial\bar\partial$-closed and nef (Proposition 2.2 in \cite{ChiTo2}), so we can use Proposition \ref{gc} to prove the existence of $\chi$ and $\varepsilon$.

\end{proof}

\bibliographystyle{amsalpha}

 \hrule \medskip
\par\noindent

\end{document}